\def\ex{\textit{E}}
\def\var{\text{Var}}
\def\pr{\textit{P}}
\def\bi{\bigskip\noindent}
\def\a{\alpha}
\def\be{\beta}
\def\ga{\gamma}
\def\la{\lambda}
\def\La{\Lambda}
\def\bi{\bigskip\noindent}
\newtheorem{theorem}{Theorem}[section]
\newtheorem{lemma}[theorem]{Lemma}
\newtheorem{corollary}[theorem]{Corollary}
\def\ex{\textit{E}}
\def\bi{\bigskip\noindent}
\date{\today}
\begin{document}

\title[Graphical partitions]{Asymptotic joint distribution of the extremities of a random Young diagram and
enumeration of graphical partitions}
\author{Boris Pittel}
\address{Department of Mathematics, The Ohio State University, $231$ West $18$-th Avenue,
Columbus, Ohio $43210-1175$ (USA)}
\email{bgp@math.ohio-state.edu}

\noindent
\keywords
{integer partition, Young diagram, tallest column, longest rows, enumeration, 
graphical partition, Macdonald/Wilf conjectures, convergence rate}
\subjclass[2010]{ 05A15, 05A16, 05A17, 11P82, 60C05, 60F10}
\begin{abstract} An integer partition of $n$ is a decreasing sequence of positive integers that add up to $[n]$. Back in $1979$ Macdonald posed a question about the limit value of the probability
that two partitions chosen uniformly at random, and independently of each other, are comparable
in terms of the dominance order. In $1982$ Wilf conjectured that the uniformly random partition
is a size-ordered degree sequence of a simple graph with the limit probability $0$. In $1997$ we 
showed that in both, seemingly unrelated, cases the limit probabilities are indeed zero, but
our method left open the problem of convergence rates. The main result in this paper is that each of the probabilities is $e^{-0.11\log n/\log\log n}$, at most. A key element of the argument is
a local limit theorem, with convergence rate,  for the {\it joint\/} distribution of the $[n^{1/4-\varepsilon}]$ tallest columns and the $[n^{1/4-\varepsilon}]$ longest rows of the Young diagram representing the random partition.
\end{abstract}
\maketitle
\section{Introduction and main results} A weakly decreasing sequence $\la=(\la_1,\dots,\la_m)$,
$m=m(\la)\ge 1$, 
of positive integers is called a partition of a positive integer $n$ into $m$ parts if $\la_1+\cdots+\la_m=n$. We will denote the set of all such partitions $\la$ by $\Omega_n$. It is customary to
visualize a partition $\la$ as a (Young-Ferrers) diagram formed by $n$ unit squares, with the 
columns of decreasing heights $\la_1,\dots,\la_m$. We will use the same
letter $\la$ for the diagram representing the partition $\la$. Introduce the positive integers
\[
\la'_i=\bigl|\{1\le j\le m(\la): \la_j\ge i\}\bigr|,\quad 1\le i\le \la_1;
\]
so $\la'_i$ is the number of parts in the partition $\la$ that are $i$, at least. Clearly $\la'_i$ decrease
and add up to $n$; so $\la':=(\la'_1,\dots,\la'_{m'})$, ($m'=\la_1$), is a partition of $[n]$, usually
referred to as the conjugate to $\la$.

The dominance order on the set $\Omega_n$ is a partial order 
$\preceq$ defined as follows. For  $\la,\mu\in \Omega_n$,  we write $\la\preceq\mu$ if 
\begin{equation}\label{dom}
\sum_{j=1}^i\la_j\le \sum_{j=1}^i\mu_j,\quad i\ge 1;
\end{equation}
(by definition $\la_j=0$ for $i>m(\la)$, $\mu_j=0$ for $j>m(\mu)$). Under $\preceq$, $\Omega_n$
is a lattice. Brylawski \cite{Bry} demonstrated how ubiquitous this lattice is. For instance,
Gale-Ryser theorem (Gale \cite{Gale} and Ryser \cite{Ryser}, Brualdi and Ryser \cite{BruRys}) asserts:  given two decreasing 
positive tuples $\a=(\a_1,\a_2, \dots,\a_r)$, $\be=(\be_1,\be_2,\dots,\be_s)$, 
there exists a {\it bipartite\/}
graph on a vertex set $(X,Y)$, $|X|=r$, $|Y|=s$, such $\a$ and $\be$ are the size-ordered 
degree sequences of vertices in $X$ and $Y$ respectively, iff $\sum_{t=1}^r\a_t=\sum_{t=1}^s\be_t$ and $\a\preceq\be'$. The lattice $\Omega_n$ is also at the core of the classic description of the irreducible representations of the symmetric group  $S_n$, see Diaconis \cite{Dia}, Macdonald \cite{Mac}, Sagan \cite{Sag}, for instance.

Soon after \cite{Gale}, \cite{Ryser}, Erd\H os and Gallai \cite{ErdGal} found the necessary
and sufficient conditions a partition $\la\in \Omega_n$, ($n$ even), has to satisfy to be
{\it graphical\/}, i.e. to be a size-ordered degree sequence of a simple graph. According
to Nash-Williams (see Sierksma and Hoogeveen \cite{SieHoo} for the proof), the Erd\H os-Gallai conditions
are equivalent to 
\begin{equation}\label{Nash}
\sum_{j=1}^i\la'_j\ge \sum_{j=1}^i\la_j+i,\quad 1\le i\le D(\la);
\end{equation}
here $D(\la)$ is the size of the Durfee square of $\la$, i.e. the number of rows of the largest square inscribed into the Young diagram $\la$. Obvious differences notwithstanding,
 the Gale-Ryser conditions and the Nash-Williams 
conditions are undeniably similar.

Macdonald \cite{Mac} (Ch.1, Section 1, Example 18) posed a probabilistic question, which may
be formally interpreted as follows. Let $\la,\,\mu\in \Omega_n$ be chosen uniformly at random and independently of each other; does $\pr(\la\preceq \mu)$ approach $0$ as $n\to\infty$? This question had already been there in the $1979$ edition of \cite{Mac}. In $1982$
Wilf conjectured that $\lim\pr(\la\text{ is graphical})=0$; apparently he wasn't aware of Macdonald's
question.

In an attempt to prove Wilf's conjecture, Erd\H os and Richmond \cite{ErdRich} found an expression for the limiting probability that $\la$ satisfies the first $k$ conditions \eqref{Nash} as a 
$2k$-dimensional integral, thus reducing the problem to the question whether the integral's
value $c_k\to 0$ as $k\to\infty$. The authors also showed that $P_n:=\pr(\la\text{ is graphical})$
is of order $n^{-1/2}$ at least, meaning that if $P_n\to 0$, it does so rather slowly. 
Rousseau and Ali \cite{RouAli} demonstrated that $\lim_{k\to\infty}
c_k\le 1/4$ and $c_k\ge 2^{-2k}\binom{2k}{k}$; consequently $\limsup P_n\le
1/4$, and $c_k$ cannot approach $0$ faster than $k^{-1/2}$, {\it if indeed\/} $c_k\to 0$.

Barnes and Savage \cite{BarSav} discovered a recurrence-based algorithm for computing the 
total number of graphical partitions of $n$. They demonstrated that for $n$ ranging from $2$ to $220$ the fraction (probability) of graphical partitions steadily, but slowly, decreases from $0.5$ to $0.3503\dots$, which is still above the Rousseau-Ali limiting bound $0.25$. More recently, Kohnert \cite{Koh} derived a new recursion formula that allowed to compute the fraction of graphical partitions for $n$ up to $910$. For $n=910$, the fraction is $0.3264\dots$, still noticeably
exceeding $0.25$. Thus,  as $n$ runs from $220$ to $910$, the fraction decreases
by $0.025$ only.

In \cite{Pittel2} we proved the positive answer to Macdonald's question. The idea of the proof,
certainly inspired by \cite{ErdRich}, \cite{RouAli}, was to show that 
\[
\lim_{k\to\infty}\limsup_{n\to\infty}\pr\bigl(\la,\mu\text{ meet the first }k\text{ conditions in }
\eqref{dom}\bigr)=0.
\]
A key tool was a theorem on the limiting joint distribution of the $k$ largest parts of the random
$\la$ due to Fristedt \cite{Fristedt}. We also confirmed Wilf's conjecture by proving that, as Erd\H os and Richmond expected, $\lim c_k=0$. We did so via a slight modification of the proof for Macdonald's question. The proofs similarity is due to an implicit discovery in \cite{ErdRich} that
the limiting {\it joint\/} distribution of the $k$ largest parts in $\la$ and in its conjugate $\la'$ is the same as that of the largest $k$ parts in two {\it independent\/} partitions of $n$.

In both cases, we found a way to use Kolmogorov's $0-1$ law for the tail events of a sequence of {\it independent\/} random variables to show that the limiting probability in question cannot be
anything but $0$ or $1$. And then we used a central limit theorem to rule out the value $1$.
So our solution left open a fundamental question about the actual convergence rates for
both Macdonald's and Wilf's probabilities.

Our main result in this paper is that, for $n$ sufficiently large,  each of those probabilities  is 
\begin{equation}\label{thebound}
\exp\left(-\frac{0.11\log n}{\log\log n}\right),
\end{equation}
at most. Thus, the bound \eqref{thebound} is negligible compared to any negative power of $\log n$, but it approaches $0$ slower than any $n^{-a}$, $a>0$ being fixed. 

We compared  the values of this bound and the exact values of the fraction of the graphical 
partitions for $n=250,\,450,\,910$, computed in \cite{Koh}. For what it's worth, replacing $0.11$
with $\approx 0.326$, $\approx 0.321$, $\approx 0.315$, we get the actual numerical values of the fraction. Could it be that the expression \eqref{thebound} with a constant  close to $0.25$ replacing $0.11$ is an asymptotic formula for the fraction?

As a direct byproduct of our
proofs, \eqref{thebound} also bounds $\pr(\la\preceq\la')$, i.e. the probability that the random
partition $\la$ is both an in-degree sequence and an out-degree sequence, each being
size-ordered, of a {\it directed\/} graph.

As the first step, we prove that, for $k=[n^{\ga}]$ and $\ga<1/4$,  the {\it total variation distance\/} of the joint distribution of the $k$ tallest columns {\it and\/} the $k$ longest rows of the random diagram $\la$ from the distribution of the random tuple 
\begin{equation}\label{sumsofiid}
\left(\left\lceil \frac{n^{1/2}}{c}\log\frac{\tfrac{n^{1/2}}{c}}{\sum_{j=1}^i \mathcal E_j}\right\rceil,
\left\lceil \frac{n^{1/2}}{c}\log\frac{\tfrac{n^{1/2}}{c}}{\sum_{j=1}^i \mathcal E'_j}\right\rceil\right)_{1\le i\le k},
\quad c:=\frac{\pi}{\sqrt{6}},
\end{equation}
is at most $n^{-1/2+2\ga}(\log n)^3$. Here $\mathcal E_1,\dots, \mathcal E_k, \mathcal E_1',\dots, \mathcal E_k'$ are independent
copies of the exponential random variable $\mathcal E$, with $\pr(\mathcal E>x)=e^{-x}$, $x\ge 0$. 

We note that Fristedt \cite{Fristedt} proved
a convergence theorem---in terms of Prohorov distance, without an explicit convergence rate---for the $k=o(n^{1/4})$ tallest columns, or, by symmetry, for the $k$ longest rows, but not for the joint distribution. In \cite{Pittel2}
we already observed that Fristedt's limit theorem can be reformulated in the form \eqref{sumsofiid}.
It was this observation that led us to the argument based on Kolmogorov's $0-1$ law.

To show the convergence in terms of total variation distance we use saddle-point techniques for analysis of Cauchy integrals representing the counts of restricted partitions, which involve the generating function of restricted partitions and Freiman's estimate of the Euler generating  function of unrestricted partitions.

Using this approximation theorem we reduce, asymptotically, the Nash-Williams conditions \eqref{Nash} to
their counterpart that involves the sums 
\[
S_i=\sum_{j=1}^i \mathcal E_j,\quad S'_i=\sum_{j=1}^i \mathcal E'_j.
\] 
A series of Chernoff-type bounds allows us essentially to embed the resulting event  into an intersection of $\tfrac{\log k}{\log\log k}$ 
independent events, each of probability roughly $\pr(\mathcal N\ge -1/2)$, $\mathcal N$ being
the standard normal variable. This yields the bound for Wilf's $\pr(\la\text{ is graphical})$, and also
for $\pr(\la\preceq\la')$. As for the bound of Macdonals's $\pr(\la\preceq \mu)$, its proof is
the simplified version of that for Wilf's probability, since $\la$ and $\mu$ are exactly independent.

The decision to use the total variation distance in this paper 
was certainly influenced by Arratia and Tavar\'{e} \cite{ArrTav}.  For a variety of the random partition problems, including
the integer partitions, they obtained a conceptual formula for that distance between the distributions of the parts counts for the problem in
question, and the certain {\it independent\/} variables that produce those counts, distribution-wise, upon  conditioning on their total
{\it weight\/} value. Based on this formula, they formulated a general conjecture as to when this distance would converge to zero. We confirmed this
conjecture for the random set partition in \cite{Pittel1.5} and for the random integer partition $\la$ in \cite{Pittel2}. We hasten to add that the Arratia-Tavar\'e model, with its central premise of conditioning on the total weight of the auxiliary independent variables,
does not cover the comparison problem in this paper.

\section{Joint distribution of the $k$ largest heights and $k$ largest widths of the random
diagram}

A partition of $n$ is visualized  as the diagram of area $n$ with column heights decreasing from
left to right. Let $p_n$ denote the total number of partitions of $n$, or equivalently the Young diagrams of area $n$, i.e.
$p_n=|\Omega_n|$.  It is well known since Euler that
\begin{equation}\label{p(q)=}
p(q):=\sum_{n\ge 1}q^n p_n=\prod_{j\ge 1}(1-q^j)^{-1},\quad |q|<1.
\end{equation}
Let $p_{n, r,s}$ denote the total number of the (restricted) diagrams,  those with the tallest column of height
$\le r$ and the longest (base) row of length $\le s$. It is also known that
\begin{equation}\label{prs(q)=}
\begin{aligned}
p_{r,s}(q):=&\,\sum_{n\ge 1}q^n p_{n,r,s}=\frac{\prod_{i=1}^{r+s}(1-q^i)}{\prod_{j=1}^r(1-q^j)
\prod_{k=1}^s(1-q^k)}\\
=&\,p(q)\cdot \prod_{j>r}(1-q^j)\cdot\prod_{k>s}(1-q^k)
\cdot\prod_{i>r+s}(1-q^i)^{-1},
\end{aligned}
\end{equation}
see Andrews \cite{Andrews}, Section 3.2. 
Hardy and Ramanujan \cite{HardyRam} used the Euler formula to find a series type formula for $p(n)$, whose simple corollary delivers
\begin{equation}\label{p(n)=}
p(n)=\frac{e^{\pi\sqrt{2n/3}}}{4\sqrt{3}n}\bigl(1+O(n^{-1/2})\bigr).
\end{equation}
This result can be obtained in a short way via a remarkably simple formula due to Freiman (see
Postnikov \cite{Postnikov}):
\begin{equation}\label{Frei}
\prod_{k\ge 1}(1-e^{-ku})^{-1}=\exp\left(\frac{\pi^2}{6u}+\frac{1}{2}\text{Log}\frac{u}{2\pi}+O(|u|)\right),
\end{equation}
uniformly for $u\to 0$ within a wedge $\{u:\text{Im}\,
u\le\varepsilon\text{Re}\, u, \text{Re}\,u>0\}$,
$\varepsilon >0$ being fixed, and $\text{Log}$ standing for the main branch of the logarithmic
function. (Freiman used \eqref{Frei} to obtain a weaker version of \eqref{p(n)=} with the
remainder term $O(n^{-1/4+\varepsilon})$.) Our aim is a sharp asymptotic formula 
for $p_{n,r,s}$ with $r,s$ of order $n^{1/2}\log n$. As a warm-up preparation, we derive \eqref{p(n)=}.
\begin{lemma}\label{|p(q)|<} Let $q=re^{i\theta}$, $0<r<1$, $\theta\in (-\pi,\pi]$. Then
\[
|p(q)|\le p(r)\exp\left(-\frac{\alpha r\theta^2}{(1-r)\bigl((1-r)^2+2r\alpha\theta^2\bigr)}\right),\quad \alpha:=2/\pi^2.
\]
\end{lemma}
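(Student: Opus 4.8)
The plan is to sidestep the modulus $|p(q)|=\prod_{j\ge1}|1-q^j|^{-1}$ in favour of the Lambert (logarithmic) series. For $|q|<1$ each factor $1-q^j$ has positive real part, so, taking principal logarithms and using absolute convergence to rearrange,
\[
\log p(q)=-\sum_{j\ge1}\mathrm{Log}(1-q^j)=\sum_{j\ge1}\sum_{m\ge1}\frac{q^{jm}}{m}=\sum_{m\ge1}\frac1m\,\frac{q^m}{1-q^m}.
\]
Since $\mathrm{Re}\,\mathrm{Log}(w)=\log|w|$ for $w\neq0$, we have $\mathrm{Re}\,\log p(q)=\log|p(q)|$, while the same identity at $q=r$ is purely real. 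Subtracting,
\[
\log p(r)-\log|p(q)|=\sum_{m\ge1}\frac1m\left(\frac{r^m}{1-r^m}-\mathrm{Re}\,\frac{q^m}{1-q^m}\right).
\]

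The next step is the one real computation: writing $q^m=r^m e^{im\theta}$, rationalising the fraction, and simplifying with $1-\cos\phi=2\sin^2(\phi/2)$ and $1-2\rho\cos\phi+\rho^2=(1-\rho)^2+4\rho\sin^2(\phi/2)$, one gets the clean nonnegative identity
\[
\frac{r^m}{1-r^m}-\mathrm{Re}\,\frac{q^m}{1-q^m}=\frac{2r^m(1+r^m)\sin^2(m\theta/2)}{(1-r^m)\bigl((1-r^m)^2+4r^m\sin^2(m\theta/2)\bigr)}\ \ge\ 0 .
\]
Because every summand is nonnegative, I may keep only the term $m=1$, obtaining
\[
\log p(r)-\log|p(q)|\ \ge\ \frac{2r(1+r)\sin^2(\theta/2)}{(1-r)\bigl((1-r)^2+4r\sin^2(\theta/2)\bigr)} .
\]
Finally Jordan's inequality gives $\sin^2(\theta/2)\ge\theta^2/\pi^2=\alpha\theta^2/2$ for $\theta\in(-\pi,\pi]$; since $t\mapsto t/((1-r)^2+4rt)$ is increasing on $t\ge0$ (its derivative is $(1-r)^2/((1-r)^2+4rt)^2>0$), replacing $\sin^2(\theta/2)$ by $\alpha\theta^2/2$ only lowers the right‑hand side, and then $1+r\ge1$ yields
\[
\log p(r)-\log|p(q)|\ \ge\ \frac{2r(1+r)}{1-r}\cdot\frac{\alpha\theta^2/2}{(1-r)^2+2r\alpha\theta^2}\ \ge\ \frac{\alpha r\theta^2}{(1-r)\bigl((1-r)^2+2r\alpha\theta^2\bigr)} ,
\]
which is the claimed inequality after exponentiation.

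The only genuine choice here---and the step I expect matters---is to expand via the Lambert series rather than to estimate $\prod_j|1-q^j|$ factor by factor. If one instead uses $|1-q^j|^2=(1-r^j)^2+4r^j\sin^2(j\theta/2)$ together with $\log(1+x)\ge x/(1+x)$, the $j=1$ term delivers only the exponent $\alpha r\theta^2/\bigl((1-r)^2+2r\alpha\theta^2\bigr)$, which is short of the target by a factor $(1-r)^{-1}$; recovering that factor then forces a delicate summation over the $\asymp(1-r)^{-1}$ indices $j$ for which $r^j$ is bounded away from $0$, with the oscillation of $\sin^2(j\theta/2)$ to be controlled. In the Lambert form the factor $(1-r^m)^{-1}$---in particular the needed $(1-r)^{-1}$ from $m=1$---appears for free, so no summation at all is required and the remaining argument is two elementary inequalities.
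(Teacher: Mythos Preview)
Your proof is correct. Both arguments reach the identical intermediate inequality
\[
\log p(r)-\log|p(q)|\ \ge\ \frac{r}{1-r}-\mathrm{Re}\,\frac{q}{1-q}
\ =\ \frac{(1+r)}{(1-r)}\cdot\frac{r(1-\cos\theta)}{(1-r)^2+2r(1-\cos\theta)},
\]
and then finish with $1-\cos\theta\ge\alpha\theta^2$ and the monotonicity of $t\mapsto t/((1-r)^2+2rt)$. The organizational difference is this: the paper applies a \emph{factor-by-factor} bound, the cited inequality $|1/(1-z)|\le(1-|z|)^{-1}\exp(\mathrm{Re}\,z-|z|)$ from \cite{Pittel1}, to each $(1-q^j)^{-1}$ and then sums the resulting exponents $\sum_{j\ge1}r^j(\cos j\theta-1)$ as a geometric series; you instead write the \emph{exact} Lambert identity $\log p(r)-\log|p(q)|=\sum_{m\ge1}m^{-1}\bigl(\tfrac{r^m}{1-r^m}-\mathrm{Re}\,\tfrac{q^m}{1-q^m}\bigr)$, show each summand is nonnegative, and keep $m=1$. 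Your route is self-contained (no external citation) and makes transparent that the paper's exponent is precisely the $m=1$ term of an exact positive series, so one sees immediately both why the bound holds and how it could in principle be sharpened. The paper's route is shorter once the quoted inequality is granted; incidentally, that inequality itself follows by the same idea, since $\log\tfrac{1-|z|}{|1-z|}=\sum_{m\ge1}m^{-1}(\mathrm{Re}\,z^m-|z|^m)\le \mathrm{Re}\,z-|z|$.
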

\begin{proof} Using an inequality
\[
\left|\frac{1}{1-z}\right|\le \frac{1}{1-|z|}\,\exp\bigl(\text{Re}z-|z|),\quad (|z|<1),
\]
see \cite{Pittel1}, we obtain
\[
|p(re^{i\theta})|\le p(r)\exp\left(\sum_{j\ge 1}r^j(\cos(\theta j)-1)\right).
\]
Here
\begin{multline*}
\sum_{j\ge 1}r^j(\cos(\theta j)-1)=-(1-r)^{-1}+\text{Re}(1-re^{i\theta})^{-1}\\
=-\frac{r}{1-r}+\text{Re}\left(\sum_{j\ge 1}(re^{i\theta})^j\right)
=-\frac{r}{1-r}+\text{Re}\left(\frac{re^{i\theta}(1-re^{-i\theta})}{|1-re^{\theta}|^2}\right)\\
=-\frac{1+r}{1-r}\cdot\frac{r(1-\cos\theta)}{(1-r)^2+2r(1-\cos\theta)}\le-\frac{1+r}{1-r}\cdot\frac{\alpha r\theta^2}{(1-r)^2+2r\alpha \theta^2},
\end{multline*}
since $1-\cos\theta\ge \alpha\theta^2$ for $|\theta|\le \pi$.
\end{proof}
This Lemma and Freiman's formula yield \eqref{p(n)=} with a relatively little effort. First of all, by
Cauchy's integral formula,
\begin{equation}\label{Cauchy}
p_n=(2\pi i)^{-1}\!\!\!\!\!\oint\limits_{z=\rho e^{i\theta}\atop \theta\in (-\pi,\pi]}\!\!\!\!z^{-(n+1)}p(z)\,dz.
\end{equation}
Predictably, we want to choose $\rho$ close to the root of $(\rho^{-n}p(\rho))^\prime=0$, or setting
$\rho=e^{-\xi}$,
\[
\sum_{j\ge 1}\frac{j}{e^{j\xi}-1}=n,
\]
implying that 
\[
\xi^{-2}\left[\int_0^{\infty}\frac{y\,dy}{e^y-1}+O(\xi)\right]=n.
\]
Since the integral equals $\sum_{j\ge 1}1/j^2=\pi^2/6$, we select $\xi=c n^{-1/2}$,
$c=\tfrac{\pi}{\sqrt{6}}$. Break $(-\pi,\pi]$ in two parts, $[-n^{-\delta},n^{-\delta}]$ and 
$[-n^{-\delta},n^{\delta}]^c$, where $\delta\in (2/3,3/4)$. Since $\delta>1/2$, by  Lemma \ref{|p(q)|<}, 
and Freiman's formula,
\begin{align}
\left|\int_{|\theta|\ge n^{-\delta}}z^{-(n+1)}p(z)\,dz\right|\le&\,\rho^{-n}p(\rho)\int_{|\theta|\ge n^{-\delta}}
e^{-\alpha_1n^{3/2-2\delta}}\,d\theta\notag\\
=&\exp\left(bn^{1/2}-\alpha_2n^{3/2-2\delta}\right),\label{int,zlarge}
\end{align}
$\alpha_j>0$ being absolute constants. Consider $|\theta|\le n^{-\delta}$. Since $\delta>1/2$, we have $|\theta|=o(\xi)$. So we apply 
Freiman's formula for $u=\xi-i\theta$ and, using $\xi^2=n^{-1}\pi^2/6$,  easily obtain
\begin{align}
\log\frac{p(\rho e^{i\theta})}{(\rho e^{\i\theta})^n}=&\,b n^{1/2} +\frac{1}{2}\log\frac{c}{2\pi\sqrt{n}}-
\frac{1}{2\xi}\,i\theta-\theta^2 n^{3/2}\gamma_n\notag\\
&+i\theta^3 n^2\gamma_n^\prime+O\bigl(\theta^4 n^{5/2}+n^{-1/2}\bigl), \label{sum}
\end{align}
where $O(n^{-1/2})$ comes from $O(|u|)$ in Freiman's formula, and
\[
\gamma_n=c^{-1}\bigl(1+O(n^{-1/2})\bigr),\quad \gamma_n^\prime=O(1).
\]
(To be sure, there is also a term $O(n\theta^2)$ in \eqref{sum}, but it is absorbed by the 
big Oh-term already there.) For $|\theta|\le n^{-\delta}$, 
\[
|\theta|^3 n^2\le n^{2-3\delta}\to 0,\quad \theta^4 n^{5/2}\le n^{5/2-4\delta}\to 0,
\]
as $\delta>2/3$. Therefore
\begin{multline*}
\exp\left(-\frac{1}{2\xi}\,i\theta +i\theta^3 n^2\gamma_n^\prime+ O\bigl(\theta^4 n^{5/2}+n^{-1/2}\bigr)\right)\\
=1-\frac{1}{2\xi}\,i\theta +i\theta^3 n^2\gamma_n^\prime +O\bigl(\theta^4 n^{5/2}+n^{-1/2}\bigr).
\end{multline*}
Consequently, as $dz=izd\theta$,
\begin{multline}\label{int,zsmall}
\oint\limits_{|\theta|\le n^{-\delta}}\!\!\!z^{-(n+1)}p(z)\,dz
=i\exp\!\left(\!bn^{1/2}+\frac{1}{2}\log\frac{c}{2\pi\sqrt{n}}\right)\\
\times
\int\limits_{|\theta|\le n^{-\delta}}\!\!\!\exp\bigl(-\theta^2n^{3/2}\gamma_n\bigr)\!
\left(1-\frac{1}{2\xi}\,i\theta +i\theta^3 n^2\gamma_n^\prime +O\bigl(\theta^4 n^{5/2}+n^{-1/2}\bigr)\!\!\right)\,d\theta.
\end{multline}
Here
\begin{align*}
&\int\limits_{|\theta|\le n^{-\delta}}\!\!e^{-\theta^2n^{3/2}\gamma_n}\,d\theta=(\pi c/n^{3/2})^{1/2}
(1+o(n^{-1/2})),\\
&\int\limits_{|\theta|\le n^{-\delta}}\!\!e^{-\theta^2 n^{3/2}\gamma_n}\left(-\frac{1}{2\xi}\,i\theta +i\theta^3 n^2\gamma_n^\prime\right)\,d\theta=0,\\
&\int\limits_{|\theta|\le n^{-\delta}}\!\!e^{-\theta^2 n^{3/2}\gamma_n}\bigl(\theta^4 n^{5/2}+n^{-1/2}\bigr)\,d\theta
=O\bigl(n^{-3/4}n^{-1/2}\bigr).
\end{align*}
Using these equations together with \eqref{Cauchy}, \eqref{int,zlarge}, \eqref{int,zsmall}
we finish the proof of \eqref{p(n)=}.\\

Let us show how to modify the argument above to obtain an asymptotic formula for $p_{n,r,s}$
for $r,s\gg \tfrac{n^{1/2}}{4c}\log n$.
\begin{lemma}\label{pnrssim} Let $h=h(n)>0$, $w=w(n)>0$ be  
 such that  $h,w=O(n^{\beta})$, $\beta<1/4$. If $r$ and $s$ are the integer parts of
\[
\frac{n^{1/2}}{c}\log\frac{\tfrac{n^{1/2}}{c}}{h},\quad \frac{n^{1/2}}{c}\log\frac{\tfrac{n^{1/2}}{c}}{w}
\]
respectively, then 
\[
p_{n,r,s}=\frac{e^{\pi\sqrt{2n/3}}}{4\sqrt{3}n}e^{-h-w}\bigl(1+O(n^{-1/2}(h+w+1)^2)\bigr).
\]
\end{lemma}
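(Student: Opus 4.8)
The plan is to run the Cauchy‑integral/saddle‑point analysis exactly as in the warm‑up derivation of \eqref{p(n)=}, but now starting from the product form \eqref{prs(q)=} rather than \eqref{p(q)=}. Writing $p_{n,r,s}=(2\pi i)^{-1}\oint z^{-(n+1)}p_{r,s}(z)\,dz$ over $z=\rho e^{i\theta}$ with $\rho=e^{-\xi}$, $\xi=cn^{-1/2}$, the integrand factors as $z^{-(n+1)}p(z)$ times the three correction products $\prod_{j>r}(1-z^j)$, $\prod_{k>s}(1-z^k)$, and $\prod_{i>r+s}(1-z^i)^{-1}$. The first factor is precisely what the warm‑up already handles, contributing $\tfrac{e^{\pi\sqrt{2n/3}}}{4\sqrt 3\,n}(1+O(n^{-1/2}))$; the whole game is to show that, on the dominant arc $|\theta|\le n^{-\delta}$, the three correction products contribute $e^{-h-w}(1+O(n^{-1/2}(h+w+1)^2))$ and are essentially constant in $\theta$, while on the complementary arc the same Lemma \ref{|p(q)|<} bound kills everything (note $|\prod_{k>s}(1-z^k)|\le\exp(\sum_{k>s}r^k)\le e^{1/(1-\rho)}=e^{O(n^{1/2})}$, and the reciprocal product is bounded similarly, so the super‑exponential decay $e^{-\alpha_2 n^{3/2-2\delta}}$ from \eqref{int,zlarge} still wins).

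The heart of the matter is the asymptotics of $\sum_{j>r}\mathrm{Log}(1-z^j)$ for $z=e^{-u}$, $u=\xi-i\theta$, $|\theta|\le n^{-\delta}$, and the analogous tail past $s$ and past $r+s$. First I would treat the real point $z=\rho$: $\sum_{j>r}\mathrm{log}(1-\rho^j)=-\sum_{j>r}\sum_{m\ge1}\rho^{jm}/m=-\sum_{m\ge1}\rho^{(r+1)m}/(m(1-\rho^m))$, and since $\rho^{r+1}=e^{-\xi(r+1)}$ and $\xi r= \log\frac{n^{1/2}/c}{h}+O(\xi r/(n^{1/2}/c))$ — using $r=\lfloor \tfrac{n^{1/2}}{c}\log\tfrac{n^{1/2}/c}{h}\rfloor$ so that $e^{-\xi r}=h/(n^{1/2}/c)\cdot(1+O(\xi))$ — the $m=1$ term is $-\rho^{r+1}/(1-\rho)=-h\cdot\frac{\rho}{(n^{1/2}/c)(1-\rho)}(1+O(\xi))=-h(1+O(n^{-1/2}))$, because $(n^{1/2}/c)(1-\rho)=(n^{1/2}/c)(1-e^{-\xi})=1+O(\xi)$. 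The terms $m\ge2$ are $O(\sum_{m\ge2}\rho^{(r+1)m}/m)=O(\rho^{2(r+1)}/(1-\rho))=O(h^2/n^{1/2})$, which is exactly the size of the claimed error. So $\prod_{j>r}(1-\rho^j)=e^{-h}(1+O(n^{-1/2}(h^2+1)))$, similarly $\prod_{k>s}(1-\rho^k)=e^{-w}(1+O(n^{-1/2}(w^2+1)))$, and $\prod_{i>r+s}(1-\rho^i)^{-1}=1+O(\rho^{r+s}/(1-\rho))=1+O(n^{-1/2}hw)$ since $\rho^{r+s}=\rho^r\rho^s=O(hw/n)$.

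Next I would transfer this from $z=\rho$ to $z=\rho e^{i\theta}$ on the short arc. The difference $\sum_{j>r}\bigl(\mathrm{Log}(1-\rho^je^{ij\theta})-\log(1-\rho^j)\bigr)$ is again dominated by its $m=1$ term $\sum_{j>r}(\rho^j-\rho^je^{ij\theta})=\frac{\rho^{r+1}}{1-\rho}-\frac{\rho^{r+1}e^{i(r+1)\theta}}{1-\rho e^{i\theta}}$; expanding $e^{i(r+1)\theta}=1+O((r+1)|\theta|)$ and $\tfrac{1}{1-\rho e^{i\theta}}=\tfrac{1}{1-\rho}(1+O(|\theta|/\xi))$, and using $(r+1)|\theta|\le (r+1)n^{-\delta}=O(n^{1/2}\log n\cdot n^{-\delta})=o(1)$ together with $|\theta|/\xi=o(1)$, this is $O(h\cdot(r+1)|\theta|)=O(h\cdot n^{1/2}\log n\cdot n^{-\delta})$, uniformly small, and its integral against the Gaussian weight $e^{-\theta^2 n^{3/2}\gamma_n}$ is negligible relative to the main term. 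In other words, on the short arc the correction products equal $e^{-h-w}$ times $1+O(n^{-1/2}(h+w+1)^2)$ plus a $\theta$‑dependent piece whose Gaussian integral is of lower order, so they factor out of the integral \eqref{int,zsmall} and simply multiply the warm‑up answer by $e^{-h-w}(1+O(n^{-1/2}(h+w+1)^2))$.

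The main obstacle I anticipate is bookkeeping the error terms so that everything consolidates cleanly into the single factor $1+O(n^{-1/2}(h+w+1)^2)$: one must check that the $O(h^2/n^{1/2})$, $O(w^2/n^{1/2})$, $O(hw/n^{1/2})$ contributions from the real‑point analysis, the $O(h\,(r+1)|\theta|)$‑type contributions from the arc transfer, and the pre‑existing $O(n^{-1/2})$ from Freiman's formula all fit under that umbrella given $h,w=O(n^\beta)$ with $\beta<1/4$ (so $h^2,w^2,hw=O(n^{2\beta})=o(n^{1/2})$, and indeed $n^{-1/2}(h+w+1)^2$ is the genuinely dominant error). A secondary point to be careful about is the integer‑part rounding $r=\lfloor\cdot\rfloor$: since $\rho^{r+1}$ changes by a bounded factor $e^{-\xi}=1+O(n^{-1/2})$ when $r$ is shifted by $1$, the rounding only perturbs $e^{-h}$ by a factor $1+O(n^{-1/2})$, i.e. it is harmless, but this should be stated explicitly. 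Once these estimates are assembled, combining the short‑arc contribution with the negligible long‑arc bound (exactly as in \eqref{int,zlarge}) and dividing by $2\pi i$ gives the stated formula for $p_{n,r,s}$.
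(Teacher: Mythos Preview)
Your overall strategy coincides with the paper's, but two of your estimates are too crude to close the argument as stated.

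\textbf{Long arc.} Your bound $|\prod_{k>s}(1-z^k)|\le e^{1/(1-\rho)}=e^{O(n^{1/2})}$ is correct but fatal: with $\delta\in(2/3,3/4)$ the decay from Lemma~\ref{|p(q)|<} is only $e^{-\alpha_2 n^{3/2-2\delta}}$ with $3/2-2\delta<1/6$, so an $e^{O(n^{1/2})}$ factor swamps it completely. Even the sharper bound $\sum_{k>s}\rho^k=\rho^{s+1}/(1-\rho)=O(w)$, giving $e^{O(h+w)}=e^{O(n^\beta)}$, only works when $\beta<1/6$. To reach the full range $\beta<1/4$ you must, as the paper does, show that on the long arc the correction products are not merely bounded but actually close to $e^{-h-w}$: one computes
\[
\text{Re}\,\frac{q^r}{1-q}-\frac{\rho^r}{1-\rho}=O\!\left(\frac{\theta^2 n^{\beta}\log^2 n}{n^{-1}+\theta^2}\right)=O(\theta^2 n^{1+\beta}\log^2 n),
\]
which is dominated by the $-\alpha\theta^2 n^{3/2}$ coming from Lemma~\ref{|p(q)|<} because $1+\beta<3/2$. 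This yields $|p_{r,s}(\rho e^{i\theta})|\le 2p(\rho)\exp(-h-w-\alpha_3 n^{3/2-2\delta})$, and now the long arc is genuinely negligible against the main term $e^{bn^{1/2}-h-w}$.

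\textbf{Short arc.} Your estimate of the $\theta$-variation of the correction as $O(h(r+1)|\theta|)$ is correct in absolute value, but its Gaussian integral gives a relative error of order $h\,r\,n^{-3/4}\sim h\,n^{-1/4}\log n$, which for bounded $h$ is $n^{-1/4}\log n$, much larger than the claimed $O(n^{-1/2}(h+w+1)^2)$. The paper repairs this by expanding
\[
\frac{q^r}{1-q}=\frac{\rho^r}{1-\rho}+i\theta\,\frac{\rho^r}{1-\rho}\Bigl(r+\frac{\rho}{1-\rho}\Bigr)+O\!\left(\frac{(r\theta)^2\rho^r}{1-\rho}\right),
\]
so that the linear-in-$\theta$ piece is purely imaginary and odd, hence integrates to zero against the even Gaussian weight (just as the $-\tfrac{i\theta}{2\xi}$ and $i\theta^3 n^2\gamma_n'$ terms did in the warm-up). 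Only the $O((r\theta)^2 h)$ term survives, and its Gaussian integral is $O(hr^2 n^{-3/2}\cdot n^{-3/4})$, giving relative error $O(h\,n^{-1/2}\log^2 n)$, which does fit under $O(n^{-1/2}(h+w+1)^2)$ after combining with the analogous $w$-term. Without exploiting this parity cancellation you cannot recover the stated rate.
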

\begin{proof} The number $p_{n,r,s}$ is given by the Cauchy integral formula, like \eqref{Cauchy},
with $p_{r,s}(q)$ instead of $p(q)$. We choose again the circle of radius  $\rho=e^{-\xi}$,
$\xi=c/n^{1/2}$. On this
circle, the product $\prod_{i>r+s}(1-q^i)^{-1}$ in the formula \eqref{prs(q)=} for $p_{r,s}(\rho e^{i\theta})$ is (uniformly) $\exp\left(O\left(\rho^{r+s}/(1-\rho)\right)\right)$, and
\[
\frac{\rho^{r+s}}{1-\rho}\sim\frac{\exp\left(-\log\tfrac{n}{c^2hw}\right)}{1-e^{-cn^{-1/2}}}=
O\bigl(n^{-1/2}hw\bigr)=O\bigl(n^{-1/2}(h^2+w^2)\bigr);
\]
so
\begin{equation}\label{last}
\prod_{i>r+s}(1-q^i)^{-1}= 1+O\bigl(n^{-1/2}(h^2+w^2)\bigr).
\end{equation}
As for two other products in \eqref{prs(q)=},
\begin{equation}\label{two}
\begin{aligned}
\prod_{j>r}(1-q^j)\prod_{k>s}(1-q^k)=&\,\exp\!\left[-\frac{q^r+q^s}{1-q}+O\!\left(\!\rho^r+\rho^s
+\frac{\rho^{2r}+\rho^{2s}}{1-\rho}\right)\right]\\
=&\,\exp\!\left(\!\!-\frac{q^r+q^s}{1-q}+O\bigl(n^{-1/2}(h^2+w^2)\bigr)\right).
\end{aligned}
\end{equation}
We use \eqref{last} and \eqref{two} to bound the contribution  to the contour integral
coming from $\theta$s not too close to $0$. To this end, evaluate first
\begin{multline*}
\text{Re}\,\left(\frac{e^{ir\theta}}{1-q}\right) -\frac{1}{1-\rho}
=\frac{\cos \theta r-\rho\cos ((r-1)\theta)}{1-2\rho\cos\theta+\rho^2}-\frac{1}{1-\rho}\\
=\frac{1}{(1-2\rho\cos\theta+\rho^2)(1-\rho)}\cdot\biggl[
(1-\rho)\bigl(\cos\theta r-\cos ((r-1)\theta)\bigr)\biggr.\\
\biggl.+(1-\rho)^2\bigl(\cos((r-1)\theta) -1\bigr)+2\rho(\cos\theta-1)\biggr].
\end{multline*}
Using $|\sin x|\le |x|$ and $\rho=e^{-c/n^{1/2}}$, the expression within the square brackets
is of order 
\[
\theta^2\bigl(n^{-1/2}r+n^{-1}r^2+1\bigr)=O(\theta^2\log^2n),
\]
and the denominator exceeds a constant factor times $n^{-1/2}\bigl(n^{-1}+\theta^2\bigr)$; so
\[
\text{Re}\,\left(\frac{e^{ir\theta}}{1-q} \right)-\frac{1}{1-\rho} =
O\left(\frac{\theta^2\log^2n}{n^{-1/2}\bigl(n^{-1}+\theta^2\bigr)}\right).
\]
Consequently, as $q=\rho e^{i\theta}$, $\rho^r\sim\tfrac{ch}{n^{1/2}}$ and $h=O(n^{\beta})$, we have
\begin{equation}\label{Rer}
\text{Re}\,\left(\frac{q^r}{1-q} \right)-\frac{\rho^r}{1-\rho} =
O\left(\frac{\theta^2 n^{\beta}\log^2n}{n^{-1}+\theta^2}\right),
\end{equation}
and the analogous estimate holds for $\tfrac{q^s}{1-q}$.

Let 
\[
|\theta|\ge n^{-\delta},\quad \delta\in (\max\{2/3, 1/2+\beta\},3/4). 
\]
The remainder term in
\eqref{Rer} is $O\bigl(\theta^2 n^{1+\beta}\log^2n\bigr)$. So using also
\eqref{last}, \eqref{two},  and then Lemma \ref{|p(q)|<} and $\beta<1/2$,  we obtain
\begin{align*}
|p_{r,s}(\rho e^{i\theta})|\le&\, 2|p(\rho e^{i\theta})|
\exp\left[-\frac{\rho^r+\rho^s}{1-\rho} +O\bigl(\theta^2 n^{1+\beta}\log^2n\bigr)\right]\\
\le&\, 2p(\rho)\exp\left(-\frac{\rho^r+\rho^s}{1-\rho} - \alpha_3 n^{3/2-2\delta}\right).
\end{align*}
Here
\[
\frac{\rho^r+\rho^s}{1-\rho}=h+w +O\bigl(n^{-1/2}(h+w)\bigr)=h+w+O\bigl(n^{-1/2+\beta}\bigr).
\]
So, as $-1/2+\beta< 3/2-2\delta$, 
\begin{align}
\left|\,\,\int\limits_{|\theta|\ge n^{-\delta}}\!\!\!\frac{p_{r,s}(z)}{z^{n+1}}\,dz\,\right|\le&\,4\pi\rho^{-n}p(\rho)
\exp\!\left(\!-\frac{\rho^r+\rho^s}{1-\rho}-\alpha_3n^{3/2-2\delta}\!\right) \notag\\
\le&\,\exp\!\left(bn^{1/2}-h-w -\alpha_4n^{3/2-2\delta}\right).\label{intprs,zlarge}
\end{align}

Let $|\theta|\le n^{-\delta}$. We need a sharp estimate for $\tfrac{q^r+q^s}{1-q}$ in \eqref{two}. An easy computation shows that, for $q=\rho e^{i\theta}$,
\begin{multline*}
\frac{q^r}{1-q}=\frac{\rho^r}{1-\rho}\left[ 1+i\theta\left(r+\frac{\rho}{1-\rho}\right)+O((r\theta)^2)\right]\\
=\frac{\rho^r}{1-\rho} +i\theta\frac{\rho^r}{1-\rho}\,\left(r+\frac{\rho}{1-\rho}\right)+
O\left(\frac{(r\theta)^2\rho^r}{1-\rho}\right).
\end{multline*}
Here
\[
\frac{\rho^r}{1-\rho}=(1+O(n^{-1/2}))h,
\]
and, since $r\sim \tfrac{n^{1/2}}{c}\log\tfrac{n^{1/2}/c}{h}$, $h=O(n^{\beta})$,  we have the bounds
\begin{align*}
|\theta| \frac{\max\{r,n^{1/2}\}\rho^r}{1-\rho}=&\,O(n^{-\delta}hn^{1/2}\log n)=
O\bigl(n^{1/2+\beta-\delta}\log n\bigr)\to 0,\\
\frac{(r\theta)^2\rho^r}{1-\rho}=&O(n^{1-2\delta}h\log^2n)=O\bigl(n^{1+\beta-2\delta}\log^2n)\to 0.
\end{align*}
Of course, we have the similar formulas for $q^s/(1-q)$. Therefore
\begin{align*}
\frac{q^r+q^s}{1-q}=&\,h+w +i\theta\frac{\rho^r}{1-\rho}\left(r+\frac{\rho}{1-\rho}\right)\\
&+O\left(\frac{(r\theta)^2\rho^r}{1-\rho}\right)+ O\bigl(n^{-1/2}(h+w)\bigr),
\end{align*}
with the $\theta$-dependent terms approaching zero uniformly for $\theta$ in question. 
Plugging this expression into \eqref{two} and using also \eqref{last},  we see that the contribution of the interval $[-n^{-\delta},n^{-\delta}]$ to the
contour integral representing $p_{n,r,s}$ is obtained via replacing the second factor
in the integrand over those $\theta$'s in \eqref{int,zsmall} with
\begin{multline*}
e^{-h-w}\biggl[1+i\theta\left(-\frac{1}{2\xi}+\frac{\rho^r}{1-\rho}\left(r+\frac{\rho}{1-\rho}\right)\right)
\biggr.\\
\biggl.+i\theta^3n^2\gamma_n' + O\bigl(\theta^4n^{5/2}+n^{-1/2}(h+w+1)^2\bigr)+
O\left(\frac{(r\theta)^2\rho^r}{1-\rho}\right)\biggr].
\end{multline*}
As in the case of $p_n$, the contributions of the $i\theta$-term and the $i\theta^3$-term to the resulting
integral are both zero, and we get
\begin{align*}
\oint\limits_{|\theta|\le n^{-\delta}}\frac{p_{r,s}(z)}{z^{n+1}}\,dz=&\,
i\exp\left(bn^{1/2}+\frac{1}{2}\log\frac{c}{2\pi\sqrt{n}} - h-w\right)\\
&\times\frac{(\pi c)^{1/2}}{n^{3/4}}\bigl(1+O(n^{-1}(h+w+1)^2)\bigr).
\end{align*}
Combining this formula with the bound  \eqref{intprs,zlarge}, and dividing by $2\pi i$, we complete the proof of Lemma \ref{pnrssim}.
\end{proof}

Let $\La=(\La_1,\La_2,\dots)$ denote the uniformly random partition of $n$,
visualized as the Ferrers diagram $\La$ with left-to-right ordered columns of decreasing heights
$\Lambda_1\ge \Lambda_2\ge\dots$. Let $\La'=(\La'_1,\La'_2,\dots)$ denote the 
partition (Ferrers diagram) conjugate to $\La$; so $\La'_j$ is the length of the $j$-th longest {\it row\/}
of $\La$. 

Lemma \ref{pnrssim} yields an asymptotic formula
for the joint distribution of $\La_1$, $\La'_2$, together with a convergence rate.
\begin{corollary}\label{La1,La'1} Introduce $H_1$, $W_1$ by setting
\[
\La_1=\frac{n^{1/2}}{c}\log\frac{\tfrac{n^{1/2}}{c}}{H_1},\quad \La'_1=\frac{n^{1/2}}{c}\log\frac{\tfrac{n^{1/2}}{c}}{W_1}.
\]
If $h,w=O(n^{\beta})$, $\beta<1/4$, then
\begin{equation*}
\pr(H_1\ge h,\,W_1\ge w)= e^{-h-w}\bigl(1+O(n^{-1/2}(h+w+1)^2)\bigr).
\end{equation*}
Informally, $H_1$ and $W_1$ are asymptotically independent, each exponentially distributed
with parameter $1$.
\end{corollary}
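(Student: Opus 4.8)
The plan is to convert the two events $\{H_1\ge h\}$ and $\{W_1\ge w\}$ into restrictions on the shape of the random diagram $\La$, so that their joint probability becomes a ratio of partition counts to which Lemma \ref{pnrssim} applies verbatim. First I would note that the map $x\mapsto \frac{n^{1/2}}{c}\log\frac{n^{1/2}/c}{x}$ is strictly decreasing on $x>0$, so $\{H_1\ge h\}$ is exactly the event $\bigl\{\La_1\le \frac{n^{1/2}}{c}\log\frac{n^{1/2}/c}{h}\bigr\}$; since $\La_1$ is a positive integer, this is the same as $\{\La_1\le r\}$ with $r=\bigl[\frac{n^{1/2}}{c}\log\frac{n^{1/2}/c}{h}\bigr]$, which is precisely the integer featured in Lemma \ref{pnrssim}. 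The identical reasoning gives $\{W_1\ge w\}=\{\La'_1\le s\}$ with $s=\bigl[\frac{n^{1/2}}{c}\log\frac{n^{1/2}/c}{w}\bigr]$. Therefore $\{H_1\ge h,\,W_1\ge w\}$ is the event that the diagram $\La$ has tallest column of height $\le r$ and longest row of length $\le s$, which holds for exactly $p_{n,r,s}$ of the $p_n$ equiprobable partitions of $n$; hence $\pr(H_1\ge h,\,W_1\ge w)=p_{n,r,s}/p_n$.

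Next, because $h,w=O(n^{\beta})$ with $\beta<1/4$, Lemma \ref{pnrssim} yields $p_{n,r,s}=\frac{e^{\pi\sqrt{2n/3}}}{4\sqrt{3}n}\,e^{-h-w}\bigl(1+O(n^{-1/2}(h+w+1)^2)\bigr)$, while \eqref{p(n)=} gives $p_n=\frac{e^{\pi\sqrt{2n/3}}}{4\sqrt{3}n}\bigl(1+O(n^{-1/2})\bigr)$. Dividing, the exponential prefactors cancel, and since $n^{-1/2}=O\bigl(n^{-1/2}(h+w+1)^2\bigr)\to 0$ in this range, the two error factors merge into a single $1+O(n^{-1/2}(h+w+1)^2)$, which is exactly the asserted identity. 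The informal statement then follows by specializing to fixed $h,w$: $\pr(H_1\ge h,\,W_1\ge w)\to e^{-h}e^{-w}$, the product of two $\mathrm{Exp}(1)$ tail probabilities, so $H_1$ and $W_1$ are asymptotically independent with exponential marginals.

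The step doing all the work is Lemma \ref{pnrssim}, so there is essentially no genuine obstacle at this level; the only points demanding a sentence of care are verifying the direction of the decreasing substitution defining $H_1,W_1$ and confirming that the integer parts appearing there coincide with the integers $r,s$ in the hypothesis of the lemma — so that the passage from the continuous bound $\La_1\le \frac{n^{1/2}}{c}\log\frac{n^{1/2}/c}{h}$ to the integer bound introduces no rounding error — together with the trivial check that $r,s\ge 1$ for all large $n$ when $h,w=O(n^{\beta})$, so that $p_{n,r,s}$ really counts restricted partitions.
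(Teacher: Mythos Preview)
Your proposal is correct and follows essentially the same route as the paper: rewrite $\{H_1\ge h,\,W_1\ge w\}$ as $\{\La_1\le r,\,\La'_1\le s\}$, identify the probability as $p_{n,r,s}/p_n$, and read off the result from Lemma~\ref{pnrssim} and \eqref{p(n)=}. The paper's own proof is a one-line version of exactly this; the only cosmetic discrepancy is that the paper writes $r,s$ with ceilings rather than floors, but since $\La_1$ is an integer the floor is the natural choice (and the difference of at most $1$ is absorbed by the error term in Lemma~\ref{pnrssim} anyway).
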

\begin{proof} Immediate from $\pr(H_1\ge h,\,W_1\ge w)=\tfrac{p_{n,r,s}}{p_n}$, with
\[
r:=\left\lceil\frac{n^{1/2}}{c}\log\frac{\tfrac{n^{1/2}}{c}}{h}\right\rceil,\,\,\,s:=\left\lceil\frac{n^{1/2}}{c}\log\frac{\tfrac{n^{1/2}}{c}}{w}\right\rceil,
\]
and Lemma \ref{pnrssim}.
\end{proof} 

The limit {\it marginal\/} distributions of $\La_1$ and $\La'_1$ were known, in the equivalent form, since
the pioneering work of Erd\H os and Lehner \cite{ErdLeh}. The novelty here is the asymptotic 
independence of $\La_1$ and $\La'_1$ {\it and\/} the explicit convergence rate. Fristedt
\cite{Fristedt} extended the Erd\H os-Lehner result considerably, by establishing the limit joint distribution
of the first $o(n^{1/4})$ largest parts of $\La$, thence, separately, the first  $o(n^{1/4})$ largest parts of $\La'$, but without an explicit convergence rate.
Our goal is to prove a counterpart of the Fristedt result for the  distribution of the first 
$k$ parts of $\La$ and {\it jointly\/} the first $k$ parts of $\La'$, together with an {\it explicit\/} convergence rate, for $k=[n^{\ga}]$, $\ga<1/4$.  We will use some of the techniques
from our studies of the random Young diagram \cite{Pittel1} and the random {\it solid\/} diagram  \cite{Pittel3}, with the added emphasis on the convergence rates as related to $k$, the number of the largest parts of $\La$ and $\La'$ that we focus on. \\ 

Let $\vec \la=(\la_1,\dots,\la_k)$, $\vec\la^\prime=(\la'_1,\dots,\la'_k)$ be such that $\la_j$ and
$\la'_j$ decrease, $\la_k>k$, $\la'_k>k$, and $\sum_j\la_j,\,\,\sum_j\la'_j\le n$. A diagram $\la$ of area $n$, with the leftmost $k$ tallest columns of height $\la_1,\dots,\la_k$, and the ``bottom-most'' $k$ longest rows of length
$\la'_1,\dots,\la'_k$, exists iff
\begin{equation}\label{nu=}
\nu:=n -\sum_{j=1}^k\la_j-\sum_{j=1}^k \la'_j+k^2>0.
\end{equation}
If we delete these $k$ columns and $k$ rows, we end up with a diagram of area $\nu$,
with the tallest column of height $r=\la_k-k$ at most, and the longest row of length $s=\la'_k-k$
at most. So introducing $p_n(\vec\la,\vec\la')$, the total number of the diagrams of area $n$ 
with parameters $\vec\la$, $\vec\la'$, we see that $p_n(\vec\la,\vec\la')=p_{\nu,r,s}$. To apply
Lemma \ref{pnrssim} to $p_{\nu,r,s}$, the parameters $\nu$,  $r=\la_k-k, s=\la'_k-k$ need to meet
the conditions of this Lemma, with $\nu$ playing the role of $n$, of course.  This observation
coupled  with the statement of Corollary \ref{La1,La'1}, and Fristedt's result for the
first $o(n^{1/4})$ parts of the random partition is our motivation for focusing on $k=[n^{\gamma}]$,
$\gamma<1/4$, and the integers $\la_j$, $\la'_j$,  such that 
\begin{equation}\label{la,la'=}
\la_j=\frac{n^{1/2}}{c}\log\frac{\tfrac{n^{1/2}}{c}}{h_j},\quad 
\la'_j=\frac{n^{1/2}}{c}\log\frac{\tfrac{n^{1/2}}{c}}{w_j}, \quad 1\le j\le k,
\end{equation}
$h_j$, $w_j$ (weakly) increase with $j$ increasing, and 
\begin{equation}\label{h,w;cond}
h_k,\,w_k=O(n^{\ga}\log n); \quad h_1,\,w_1\ge n^{-1/2+2\ga}.
\end{equation}

With $\nu$ instead of $n$ in Lemma \ref{pnrssim} we need to determine $h^*$ and $w^*$ such that
\[
r=\la_k-k=\left\lceil\frac{\nu^{1/2}}{c}\log\frac{\tfrac{\nu^{1/2}}{c}}{h^*}\right\rceil,\quad
s=\la'_k-k=\left\lceil\frac{\nu^{1/2}}{c}\log\frac{\tfrac{\nu^{1/2}}{c}}{w^*}\right\rceil,
\]
i.e
\begin{align*}
\frac{\nu^{1/2}}{c}\log\frac{\tfrac{\nu^{1/2}}{c}}{h^*}=&\,\frac{n^{1/2}}{c}\log\frac{\tfrac{n^{1/2}}{c}}{h_k}-k+O(1),\\
\frac{\nu^{1/2}}{c}\log\frac{\tfrac{\nu^{1/2}}{c}}{w^*}=&\,\frac{n^{1/2}}{c}\log\frac{\tfrac{n^{1/2}}{c}}{w_k}-k +O(1).
\end{align*}
Here, by the definition \eqref{nu=},
\begin{equation}\label{nu=appr}
\begin{aligned}
\nu=&\,n+k^2-\sum_{j=1}^k\frac{n^{1/2}}{c}\log\frac{\tfrac{n^{1/2}}{c}}{h_jw_j}+O(k)\\
=& n-\Theta\bigl(n^{\ga+1/2}\log n\bigr),
\end{aligned}
\end{equation}
$\Theta(A)$ denoting a remainder term of order $A$ exactly. Using the second line in 
\eqref{nu=appr}, {\it and\/} the fact that $|\log h_k|=\Theta(\log n)$, $|\log w_k|=\Theta(\log n)$, 
we easily show that $h^*$, $w^*$ exist, and are given by 
\begin{equation}\label{h*+w*}
h^*=\bigl(1+O(n^{\ga-1/2}\log^2 n)\bigr)h_k,\quad w^*=\bigl(1+O(n^{\ga-1/2}\log^2 n)\bigr)w_k,
\end{equation}
implying that
\begin{align*}
h^*+w^*=&\,h_k+w_k +O\bigl((h_k+w_k)n^{\ga-1/2}\log^2n\bigr)\\
=&\,h_k+w_k+O\bigl(n^{2\ga-1/2}\log^3n\bigr).
\end{align*}
Another simple evaluation, based on the first line in \eqref{nu=appr}, yields
\begin{equation}\label{pisqrt}
\pi\sqrt{\frac{2\nu}{3}}=\pi\sqrt{\frac{2n}{3}}-\sum_{j=1}^k\log\frac{\left(\tfrac{n^{1/2}}{c}\right)^2}{h_jw_j}+O\bigl(n^{2\ga-1/2}\log n\bigr).
\end{equation}
It follows then from Lemma \ref{pnrssim} and \eqref{h*+w*}, \eqref{pisqrt} that
\begin{equation}\label{pla,la'}
\begin{aligned}
p_n(\vec\la,\vec\la')=&\,\frac{e^{\pi\sqrt{2n/3}}}{4\sqrt{n}}\prod_{j=1}^k\frac{h_jw_j}{\left(\tfrac{n^{1/2}}{c}\right)^2}\\
&\times \exp\bigl(-h_k-w_k +O(n^{2\ga-1/2}\log^3n)\bigr).
\end{aligned}
\end{equation}
{\bf Note.\/} The RHS of \eqref{pla,la'} is obviously positive, whence it had better be  $1$, at least. 
And indeed, the RHS approaches infinity for $h_j,w_j$ meeting the constraints \eqref{h,w;cond}.
\begin{corollary}\label{Pr(la,la')=} Let $\gamma<1/4$. Then, uniformly for 
 $(\la_j,\la'_j)_{1\le j\le k}$ defined in \eqref{la,la'=}, with  $h_j,\,w_j$, ($1\le j \le [n^{\ga}]$) satisfying \eqref{h,w;cond}, 
\begin{align*}
&\pr\left(\bigcap\limits_{1\le j\le k}\{\La_j=\la_j, \La'_j=\la'_j\}\right)\\
&\quad=\exp\bigl(-h_k-w_k +O(n^{2\ga-1/2}\log^3n)\bigr)
\prod_{j=1}^k\frac{h_jw_j}{\left(\tfrac{n^{1/2}}{c}\right)^2}.
\end{align*}
\end{corollary}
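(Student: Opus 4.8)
The plan is to derive Corollary \ref{Pr(la,la')=} directly from the exact identity
\[
\pr\left(\bigcap_{1\le j\le k}\{\La_j=\la_j,\ \La'_j=\la'_j\}\right)=\frac{p_n(\vec\la,\vec\la')}{p_n}=\frac{p_{\nu,r,s}}{p_n},
\]
with $\nu,r,s$ as in \eqref{nu=}, i.e. $r=\la_k-k$, $s=\la'_k-k$. First I would verify that the hypotheses \eqref{la,la'=}--\eqref{h,w;cond} force $\nu>0$ (indeed $\nu=n-\theta(n^{\ga+1/2}\log n)$ is asymptotic to $n$) and that $r,s$ fall in the admissible window of Lemma \ref{pnrssim} with $\nu$ in place of $n$: this is exactly the motivation already laid out in the excerpt, so it amounts to checking that $h^*,w^*$ defined implicitly through $r,s$ exist and satisfy $h^*,w^*=O(\nu^{\beta})$ for some $\beta<1/4$; the bounds $h_k,w_k=O(n^{\ga}\log n)$ with $\ga<1/4$ and $\nu\sim n$ give this with room to spare.

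Next I would assemble the three asymptotic inputs already prepared: equation \eqref{h*+w*} and the subsequent display for $h^*+w^*$, equation \eqref{pisqrt} for $\pi\sqrt{2\nu/3}$, and the evaluation of $\nu$ in \eqref{nu=appr}. Plugging these into Lemma \ref{pnrssim} applied to $p_{\nu,r,s}$ — noting that the Lemma's factor $e^{-h^*-w^*}$ becomes $e^{-h_k-w_k+O(n^{2\ga-1/2}\log^3n)}$, its prefactor $\tfrac{1}{4\sqrt{3}\nu}$ absorbs into $\tfrac{1}{4\sqrt{n}}$ up to a multiplicative $1+O(n^{\ga-1/2}\log n)$, and the correction term $1+O(\nu^{-1/2}(h^*+w^*+1)^2)$ is $1+O(n^{2\ga-1/2}\log^2 n)$ since $h^*+w^*=O(n^{\ga}\log n)$ — yields exactly \eqref{pla,la'}. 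Then I would divide by $p_n$, using \eqref{p(n)=} in the form $p_n=\tfrac{e^{\pi\sqrt{2n/3}}}{4\sqrt{3}n}(1+O(n^{-1/2}))$; the $e^{\pi\sqrt{2n/3}}$ and $\tfrac{1}{4n}$ factors cancel against their counterparts in \eqref{pla,la'}, the stray $\sqrt{3}$'s cancel, and the $O(n^{-1/2})$ from $p_n$ is dominated by the $O(n^{2\ga-1/2}\log^3 n)$ error already present. What survives is precisely
\[
\exp\bigl(-h_k-w_k+O(n^{2\ga-1/2}\log^3 n)\bigr)\prod_{j=1}^k\frac{h_jw_j}{\bigl(\tfrac{n^{1/2}}{c}\bigr)^2},
\]
which is the claimed formula.

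The uniformity clause requires a little care but no new idea: every $O(\cdot)$ estimate invoked from \eqref{h*+w*}, \eqref{pisqrt}, \eqref{nu=appr} and Lemma \ref{pnrssim} is uniform over the stated range of $(h_j,w_j)$, because the only features of the $h_j,w_j$ used are the two-sided bounds in \eqref{h,w;cond} — namely $|\log h_k|,|\log w_k|=\theta(\log n)$ (used to show $r,s$ land in the right window and to control $\nu$), and $h_1,w_1\ge n^{-1/2+2\ga}$ (used to keep the $\log$'s in \eqref{nu=appr} bounded below in absolute value, and to make the Note's positivity remark work). Since none of these constants depend on the particular admissible tuple, the error terms are uniform.

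The main obstacle, such as it is, is bookkeeping rather than mathematics: one must be careful that the error term $O(n^{2\ga-1/2}\log^3 n)$ genuinely dominates every other error introduced along the way — in particular the $O(n^{\ga-1/2}\log^2 n)$ from $h^*\approx h_k$, the $O(n^{2\ga-1/2}\log^2 n)$ from the Lemma's relative error with $h^*+w^*=O(n^{\ga}\log n)$, the $O(n^{2\ga-1/2}\log n)$ from \eqref{pisqrt}, and the $O(n^{-1/2})$ from $p_n$ — so that they can all be folded into a single exponentiated $O(n^{2\ga-1/2}\log^3 n)$ without the product $\prod_j h_jw_j/(n^{1/2}/c)^2$, which may itself be enormous, corrupting the estimate. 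Since that product multiplies the whole expression and the error sits inside the exponential, this is legitimate: $(1+O(\varepsilon))=e^{O(\varepsilon)}$ for $\varepsilon\to0$, and all the $\varepsilon$'s here are $o(1)$ uniformly under \eqref{h,w;cond}.
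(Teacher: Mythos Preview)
Your proposal is correct and follows exactly the approach of the paper: the corollary is stated immediately after \eqref{pla,la'} as its direct consequence, obtained by dividing $p_n(\vec\la,\vec\la')=p_{\nu,r,s}$ (asymptotically evaluated via Lemma \ref{pnrssim} together with \eqref{nu=appr}, \eqref{h*+w*}, \eqref{pisqrt}) by $p_n$ from \eqref{p(n)=}. Your write-up supplies the error-term bookkeeping that the paper leaves implicit, and the uniformity discussion is apt; the minor slip of writing $\tfrac{1}{4\sqrt{n}}$ instead of $\tfrac{1}{4\sqrt{3}n}$ mirrors a typo in the paper's \eqref{pla,la'} and is harmless since the prefactors cancel upon division by $p_n$.
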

To interpret this result gainfully, introduce the ``slanted'' $\varLambda_j$ as follows.
Let $\boldsymbol{\mathcal E}=(\mathcal E_1,\dots,\mathcal E_k)$, $\boldsymbol{\mathcal E}'=(\mathcal E_1',\dots,\mathcal E'_k)$ be such that all $\mathcal E_i,\,\mathcal E_i'$ are
independent copies of a random variable $\mathcal E$, with $\pr(\mathcal E>x)=e^{-x}$, $x\ge 0$. Define
\begin{align}
&\varLambda_j=\left\lceil\frac{n^{1/2}}{c}\log\frac{\tfrac{n^{1/2}}{c}}{S_j}\right\rceil,\quad 
\varLambda'_j=\left\lceil\frac{n^{1/2}}{c}\log\frac{\tfrac{n^{1/2}}{c}}{S'_j}\right\rceil, 
\label{slantLaj,La'j=}\\
&S_j=\sum_{i=1}^j\mathcal E_i,\quad S'_j=\sum_{i=1}^j\mathcal E_i'.\label{Hj,Wj=}
\end{align}
For the generic values $u_1,\dots u_k, v_1,\dots, v_k$ of $S_1,\dots, S_k,S'_1,\dots,S'_k$,
the joint density is $e^{-u_k-v_k}$, provided that $u_1\le \cdots \le u_k$, $v_1\le\cdots\le v_k$;
the density is zero otherwise. 

Let us compute $\pr\,\Bigl(\bigcap\limits_{1\le j\le k}\{\varLambda_j=\la_j,\varLambda'_j=\la'_j\}\Bigr)$.
Using  \eqref{la,la'=} and \eqref{slantLaj,La'j=}, we obtain that 
\begin{align*}
\varLambda_j=&\,\la_j\quad \text{ iff }\quad S_j\in I(\la_j):=\frac{n^{1/2}}{c}e^{-c\la_jn^{-1/2}}\bigl(e^{-cn^{-1/2}},1\bigr],\\
\varLambda'_j=&\,\la'_j\quad\text{ iff }\quad S'_j\in I(\la'_j):=\frac{n^{1/2}}{c}e^{-c\la'_jn^{-1/2}}\bigl(e^{-cn^{-1/2}},1\bigr].
\end{align*}
The intervals $I(\la_j)$ ($I(\la'_j)$ resp.) do not overlap, with $\inf\{x: x\in I(\la_j)\}
\ge \max\{x: x\in I(\la_{j-1})\}$ ($\inf\{x: x\in I(\la'_j)\}
\ge \max\{x: x\in I(\la'_{j-1})\}$ resp.), if $\la_{j -1}-\la_j\ge1$ ($\la'_{j-1}-\la'_j\ge 1$ resp.). For this
choice of $\la_1,\dots,\la_k,\la'_1,\dots,\la'_k$, we have
\begin{equation}\label{P(slLa=)=}
\pr\left(\bigcap\limits_{1\le j\le k}\!\!\!\{\varLambda_j=\la_j,\varLambda'_j=\la'_j\}\!\!\right)
=\prod_{j=1}^{k-1}\bigl|(\la_j)\bigr|\cdot \bigl|I(\la'_j)\bigr|\iint\limits_{u\in I(\la_k)\atop v\in I(\la'_k)}
\!\!\!e^{-u-v}\,du\,dv.
\end{equation}
For $1\le j\le k$,
\begin{multline*}
|I(\la_j)|=\frac{n^{1/2}}{c}e^{-c\la_j n^{-1/2}}\bigl(1-e^{-cn^{-1/2}}\bigr)\\
=\bigl(1+O(n^{-1/2})\bigr)\exp\left(-\log\frac{\tfrac{n^{1/2}}{c}}{h_j}\right)=(1+O(n^{-1/2}))\frac{h_j}
{\tfrac{n^{1/2}}{c}},
\end{multline*}
and likewise
\begin{equation*}
|I(\la'_j)|=\bigl(1+O(n^{-1/2})\bigr)\frac{w_j}{\tfrac{n^{1/2}}{c}}.
\end{equation*}
So 
\begin{equation}\label{j<k}
\prod_{j=1}^{k-1}\bigl|I(\la_j)\bigr| \cdot\bigl|I(\la'_j)\bigr|=(1+O(n^{\ga-1/2}))\prod_{j=1}^{k-1}\frac{h_jw_j}{\left(\tfrac{n^{1/2}}{c}\right)^2}.
\end{equation}
Next, the leftmost, i.e. the infimum, points of $I(\la_k)$ and $I(\la'_k)$ are respectively
\[
h_k\bigl(1+O(n^{-1/2})\bigr)=h_k+O\bigl(n^{\ga-1/2}\bigr),\quad 
w_k\bigl(1+O(n^{-1/2})\bigr)=w_k+O\bigl(n^{\ga-1/2}\bigr).
\]
Consequently
\begin{equation}\label{integ=}
\iint\limits_{u\in I(\la_k)\atop v\in I(\la'_k)}
\!\!\!e^{-u-v}\,du\,dv=\bigl(1+O(n^{\ga-1/2})\bigr)\frac{h_kw_k}{\left(\tfrac{n^{1/2}}{c}\right)^2}\,e^{-h_k-w_k}.
\end{equation}
Combining \eqref{P(slLa=)=}, \eqref{j<k} and \eqref{integ=}, we conclude
\begin{equation*}
\pr\left(\bigcap\limits_{1\le j\le k}\!\!\!\{\varLambda_j=\la_j,\varLambda'_j=\la'_j\}\!\!\right)
=(1+O(n^{\ga-1/2}))\prod_{j=1}^k\frac{h_jw_j}{\left(\tfrac{n^{1/2}}{c}\right)^2}e^{-h_k-w_k};
\end{equation*}
to remind, $\varLambda_j$, $\varLambda'_j$ are defined in \eqref{slantLaj,La'j=} and
\eqref{Hj,Wj=}. This equation and Corollary \ref{Pr(la,la')=} imply
\begin{lemma}\label{La=slanLa} Let $\mathcal G$ denote the set of all tuples
 $(\la_j,\la'_j)_{1\le j\le k}$ defined in \eqref{la,la'=}, with  $h_j,\,w_j$, ($1\le j \le [n^{\ga}]$) satisfying \eqref{h,w;cond}, and such that $\la_j$, $\la'_j$ strictly decrease with $j$. Uniformly
 for $(\la_j,\la'_j)_{1\le j\le k}\in \mathcal G$, we have
 \begin{equation}\label{PLambda=PsLambda}
 \begin{aligned}
 \pr\left(\bigcap\limits_{1\le j\le k}\{\La_j=\la_j, \La'_j=\la'_j\}\right)=&\,
 \left(1+O\bigl(n^{2\ga-1/2}\log^3n\bigr)\right)\\
 &\times \pr\left(\bigcap\limits_{1\le j\le k}\!\!\!\{\varLambda_j=\la_j,\varLambda'_j=\la'_j\}\!\!\right).
 \end{aligned}
 \end{equation}
\end{lemma}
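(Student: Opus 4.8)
The plan is to obtain \eqref{PLambda=PsLambda} by matching the two asymptotic formulas already derived for the two probabilities and noting that their leading factors coincide. Concretely, Corollary \ref{Pr(la,la')=} gives, uniformly over $(\la_j,\la'_j)_{1\le j\le k}$ of the form \eqref{la,la'=} with the side conditions \eqref{h,w;cond},
\[
\pr\Bigl(\bigcap_{1\le j\le k}\{\La_j=\la_j,\La'_j=\la'_j\}\Bigr)
=\exp\bigl(-h_k-w_k+O(n^{2\ga-1/2}\log^3 n)\bigr)\prod_{j=1}^k\frac{h_jw_j}{(n^{1/2}/c)^2},
\]
while the computation \eqref{P(slLa=)=}--\eqref{integ=} leading up to this lemma gives, for the ``slanted'' variables $\varLambda_j,\varLambda'_j$ of \eqref{slantLaj,La'j=}--\eqref{Hj,Wj=},
\[
\pr\Bigl(\bigcap_{1\le j\le k}\{\varLambda_j=\la_j,\varLambda'_j=\la'_j\}\Bigr)
=\bigl(1+O(n^{\ga-1/2})\bigr)\,e^{-h_k-w_k}\prod_{j=1}^k\frac{h_jw_j}{(n^{1/2}/c)^2}.
\]
So first I would record these two displays; the whole content of the lemma is that their ratio is $1+O(n^{2\ga-1/2}\log^3 n)$.

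Next I would divide the first expression by the second. The common factor $e^{-h_k-w_k}\prod_{j\le k}h_jw_j(n^{1/2}/c)^{-2}$ cancels exactly, leaving the ratio
\[
\frac{\exp\bigl(O(n^{2\ga-1/2}\log^3 n)\bigr)}{1+O(n^{\ga-1/2})}.
\]
Since $\ga<1/4$, both $n^{2\ga-1/2}\log^3 n\to 0$ and $n^{\ga-1/2}\to 0$, and the larger of the two error terms is the numerator's, because $2\ga-1/2>\ga-1/2$. Using $e^{x}=1+O(x)$ for $x\to 0$ and $(1+O(n^{\ga-1/2}))^{-1}=1+O(n^{\ga-1/2})$, the ratio is $1+O(n^{2\ga-1/2}\log^3 n)$, which is exactly \eqref{PLambda=PsLambda}. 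The restriction to the set $\mathcal G$ — i.e.\ requiring the $\la_j$ and $\la'_j$ to be \emph{strictly} decreasing in $j$ — is precisely what is needed for the slanted-variable computation: strict monotonicity of $\la_j$ (equivalently of $h_j$, after checking $\la_{j-1}-\la_j\ge 1$) guarantees that the intervals $I(\la_j)$ are pairwise disjoint and correctly ordered, so that the event $\bigcap_j\{\varLambda_j=\la_j,\varLambda'_j=\la'_j\}$ decomposes into the product form \eqref{P(slLa=)=}; I would state that this is why $\mathcal G$ is defined with that extra constraint.

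There is essentially no obstacle here: the lemma is a bookkeeping consequence of the two corollaries, and the only things to be careful about are (i) that the uniformity ranges in the two input formulas match the definition of $\mathcal G$ — they do, since $\mathcal G$ imposes \eqref{la,la'=} and \eqref{h,w;cond} plus strict monotonicity, and both Corollary \ref{Pr(la,la')=} and the interval computation are valid on that set — and (ii) that the dominant error term after taking the ratio is the $O(n^{2\ga-1/2}\log^3 n)$ one rather than the $O(n^{\ga-1/2})$ one, which holds because $2\ga-1/2>\ga-1/2$ and $\log^3 n$ is negligible against any positive power of $n$. If anything warrants a sentence of justification, it is the implicit claim that the side conditions \eqref{h,w;cond} force $h_k,w_k=O(n^{\ga}\log n)$ so that the term $e^{-h_k-w_k}$ is harmless (it is merely bounded by $1$, and cancels anyway), and that the product $\prod_{j\le k}h_jw_j(n^{1/2}/c)^{-2}$ is common to both sides and need not be estimated at all — it simply cancels.
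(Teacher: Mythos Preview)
Your proposal is correct and is essentially the paper's own argument: the paper simply writes ``This equation and Corollary \ref{Pr(la,la')=} imply'' the lemma, meaning exactly the comparison of the two displayed asymptotic formulas that you carry out. Your additional remarks on why strict monotonicity is needed for the interval decomposition \eqref{P(slLa=)=} and why the $O(n^{2\ga-1/2}\log^3 n)$ term dominates are accurate elaborations of what the paper leaves implicit.
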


\noindent Let us show in three steps that whp $(\varLambda_j,\varLambda'_j)_{1\le j\le k}\in \mathcal G$.\\

\noindent {\bf (1)\/} Recalling the definition of $\varLambda_j$, we have
\begin{align*}
&\pr\left(\min_{2\le j\le k}(\varLambda_{j-1}-\varLambda_j)=0\right)\le 
\sum_{j=2}^k \pr\bigl(\varLambda_{j-1}-\varLambda_j=0\bigr)\\
=&\sum_{j=2}^k\pr\left(\frac{S_j}{S_{j-1}}\le e^{cn^{-1/2}}\right)=\sum_{j=2}^k\,\,\,\,\iint\limits_{\tfrac{u+v}{u}
\le  \exp\left(\tfrac{c}{n^{1/2}}\right)}\frac{u^{j-2}
}{(j-2)!}e^{-u-v}\,dudv\\
=&\sum_{j=2}^k\int\limits_0^{\infty}\frac{u^{j-2}}{(j-2)!}\,e^{-u(1-e^{-c n^{-1/2}})^{-1}}\,du=
\sum_{j=2}^k(1-e^{-cn^{-1/2}})^{j-1}\\
&\qquad\qquad =\,O(k^2n^{-1/2})=O(n^{2\ga-1/2})\to 0.
\end{align*}
Thus, with probability $1-O\bigl(n^{2\ga-1/2}\bigr)$,  $\La_j$ strictly decreases, and similarly
so does $\La'_j$.\\

\noindent {\bf (2)\/}  Next, for $S_k$ in the formula for $\varLambda_k$ we have $\ex\bigl[e^{zS_k}\bigr]=(1-z)^{-k}$
if $z\in (0,1)$. So, using Chernoff-type bound,
\begin{equation*}
\pr\bigl(S_k\ge n^{\ga}\log n\bigr)\le\frac{\ex\bigl[e^{zS_k}\bigr]}{\exp\bigl(zn^{\ga}\log n\bigr)}
\le\frac{(1-z)^{-n^{\ga}}}{\exp\bigl(zn^{\ga}\log n\bigr)}.
\end{equation*}
The last fraction attains its minimum at $z=1-\tfrac{1}{\log n}$, and so
\begin{equation}\label{Sksmall}
\pr\bigl(S_k\ge n^{\ga}\log n\bigr)\le \exp\bigl(-0.5n^{\ga}\log n\bigr).
\end{equation}
Analogous bound holds for $S'_k$ in the formula for $\varLambda'_k$.\\

\noindent {\bf (3)\/} For $S_1$ in the formula for $\varLambda_1$, and $S'_1$ in the formula for $\varLambda'_1$, we have
\begin{equation}\label{S1,S'1}
\pr\bigl(S_1\le n^{-1/2+2\ga}\bigr)=\pr\bigl(S'_1\le n^{-1/2+2\ga}\bigr)=1-e^{-n^{-1/2+2\ga}}\le 
n^{-1/2+2\ga},
\end{equation}
a bound matching the bound in the item {\bf (1)\/}.\\

\noindent Putting together the items {\bf (1)\/}, {\bf (2)\/}, {\bf (3)\/}, and using $k\le n^{\gamma}$,  we obtain 
\begin{equation}\label{varLainG}
\pr\,\bigl((\varLambda_j,\varLambda'_j)_{1\le j\le k}\in \mathcal G)\ge 1- O\bigl(n^{2\ga-1/2}\bigr).
\end{equation}
 Adding up the equations \eqref{PLambda=PsLambda} for all $(\la_j,\la_j')_{1\le j\le k}\in
\mathcal G$ and using \eqref{varLainG}, we get
\begin{equation}\label{LainG}
\pr\,\bigl((\Lambda_j,\Lambda'_j)_{1\le j\le k}\in \mathcal G)=1-O\bigl(n^{2\ga-1/2}\log^3 n\bigr).
\end{equation}

Let $\mu_{(\vec\La,\vec\La')}$, $\mu_{(\vec\varLambda,\vec\varLambda')}$ denote the
probability distribution of $(\vec\La,\vec\La')$ and the probability distribution of $(\vec\varLambda,\vec\varLambda')$. Introduce $d_{TV}\bigl(\mu_{(\vec\La,\vec\La')},\mu_{(\vec\varLambda,\vec\varLambda')}\bigr)$, the {\it total variation\/} distance between the two distributions, i.e.
\[
d_{TV}\bigl(\mu_{(\vec\La,\vec\La')},\,\mu_{(\vec\varLambda,\vec\varLambda')}\bigr):=
\max_A\bigl|\pr\bigl((\vec\La,\vec\La')\in A\bigr)-\pr\bigl((\vec\varLambda,\vec\varLambda')\in A\bigr)\bigr|;
\]
here $A$ is a generic subset of the set of all decreasing tuples $(\la_j,\la'_j)_{1\le j\le k}$,
with $\sum_{1\le j\le k}\la_j\le n$,  $\sum_{1\le j\le k}\la'_j\le n$.
\begin{theorem}\label{dTVsmall} If $k=[n^{\ga}]$ and $\ga<1/4$, then 
\begin{equation*}
d_{TV}\bigl(\mu_{(\vec\La,\vec\La')},\,\mu_{(\vec\varLambda,\vec\varLambda')}\bigr)=
O\bigl(n^{2\ga-1/2}\log^3 n\bigr).
\end{equation*}
Consequently, for two generic subsets $B_1$, $B_2$ of all  decreasing tuples $(\la_j)_{1\le j\le k}$, with $\sum_{1\le j\le k}\la_j\le n$,
we have
\[
\pr(\vec\Lambda\in B_1, \vec\Lambda'\in B_2)=\pr(\vec\Lambda\in B_1)\pr(\vec\Lambda'\in B_2)+
O\bigl(n^{2\ga-1/2}\log^3 n\bigr).
\]
\end{theorem}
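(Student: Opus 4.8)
The plan is to read Theorem~\ref{dTVsmall} off the pointwise estimate of Lemma~\ref{La=slanLa} together with the concentration bounds \eqref{varLainG}, \eqref{LainG}, via the elementary formula for total variation distance on a countable set. Both $(\vec\La,\vec\La')$ and $(\vec\varLambda,\vec\varLambda')$ are supported on the countable set $\mathcal T$ of weakly decreasing tuples $(\la_j,\la'_j)_{1\le j\le k}$ with $\sum_j\la_j\le n$, $\sum_j\la'_j\le n$, so
\[
d_{TV}\bigl(\mu_{(\vec\La,\vec\La')},\,\mu_{(\vec\varLambda,\vec\varLambda')}\bigr)
=\frac12\!\!\sum_{(\vec\la,\vec\la')\in\mathcal T}\!\!\Bigl|\pr\bigl((\vec\La,\vec\La')=(\vec\la,\vec\la')\bigr)-\pr\bigl((\vec\varLambda,\vec\varLambda')=(\vec\la,\vec\la')\bigr)\Bigr|,
\]
and I would split this sum into the part with $(\vec\la,\vec\la')\in\mathcal G$ and the part with $(\vec\la,\vec\la')\in\mathcal T\setminus\mathcal G$.

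On $\mathcal T\setminus\mathcal G$ I would bound each term by the sum of the two probabilities, so that this part contributes at most $\pr\bigl((\vec\La,\vec\La')\notin\mathcal G\bigr)+\pr\bigl((\vec\varLambda,\vec\varLambda')\notin\mathcal G\bigr)=O\bigl(n^{2\ga-1/2}\log^3n\bigr)$ by \eqref{varLainG} and \eqref{LainG}. On $\mathcal G$, the uniform estimate \eqref{PLambda=PsLambda} gives
\[
\Bigl|\pr\bigl((\vec\La,\vec\La')=(\vec\la,\vec\la')\bigr)-\pr\bigl((\vec\varLambda,\vec\varLambda')=(\vec\la,\vec\la')\bigr)\Bigr|=O\bigl(n^{2\ga-1/2}\log^3n\bigr)\,\pr\bigl((\vec\varLambda,\vec\varLambda')=(\vec\la,\vec\la')\bigr)
\]
with an implied constant not depending on the tuple, and summing over $\mathcal G$ (using that the $\varLambda$-probabilities add up to at most $1$) again gives $O\bigl(n^{2\ga-1/2}\log^3n\bigr)$. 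Adding the two contributions proves the first assertion of the theorem.

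For the consequence, set $A=\{(\vec\la,\vec\la'):\vec\la\in B_1,\ \vec\la'\in B_2\}$; the definition of $d_{TV}$ gives $\pr\bigl((\vec\La,\vec\La')\in A\bigr)=\pr\bigl((\vec\varLambda,\vec\varLambda')\in A\bigr)+O\bigl(n^{2\ga-1/2}\log^3n\bigr)$, and, applied to the two cylinder events, $\pr(\vec\La\in B_1)=\pr(\vec\varLambda\in B_1)+O\bigl(n^{2\ga-1/2}\log^3n\bigr)$ and $\pr(\vec\La'\in B_2)=\pr(\vec\varLambda'\in B_2)+O\bigl(n^{2\ga-1/2}\log^3n\bigr)$. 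By \eqref{slantLaj,La'j=}--\eqref{Hj,Wj=}, $\vec\varLambda$ is a function of $E_1,\dots,E_k$ alone and $\vec\varLambda'$ a function of $E_1',\dots,E_k'$ alone, so these two blocks of variables being independent we have $\pr\bigl((\vec\varLambda,\vec\varLambda')\in A\bigr)=\pr(\vec\varLambda\in B_1)\,\pr(\vec\varLambda'\in B_2)$ exactly; substituting and using that all probabilities are at most $1$ collapses the error terms into a single $O\bigl(n^{2\ga-1/2}\log^3n\bigr)$ and yields the stated formula. I expect no remaining analytic difficulty here, everything hard having been carried out in Lemmas~\ref{pnrssim} and \ref{La=slanLa}; the one point to watch is that $\mathcal G$ carries \emph{strict} monotonicity of the parts and the two-sided size constraints \eqref{h,w;cond}, so that the truncation to $\mathcal G$ must be performed for \emph{both} measures simultaneously, which is exactly what \eqref{varLainG} and \eqref{LainG} make possible.
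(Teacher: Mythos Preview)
Your proof is correct and is exactly the argument the paper has in mind: the paper's own proof is the single line ``Immediate, based on $A=A\cap \mathcal G + A\cap \mathcal G^c$ and \eqref{PLambda=PsLambda}, \eqref{varLainG}, \eqref{LainG},'' and you have simply written this out via the $\ell^1$ formula for total variation and then derived the product-form consequence from independence of $\vec\varLambda$ and $\vec\varLambda'$. The only cosmetic point is that $(\vec\varLambda,\vec\varLambda')$ is not literally supported on your set $\mathcal T$ (e.g.\ $\varLambda_1$ can exceed $n$ when $S_1$ is tiny), but this is harmless since the $\ell^1$ identity for $d_{TV}$ holds over any common countable carrier and your $\mathcal G/\mathcal G^c$ split goes through unchanged.
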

\begin{proof} Immediate, based on $A=(A\cap \mathcal G)\cup (A\cap \mathcal G^c)$ and \eqref{PLambda=PsLambda},
\eqref{varLainG},  \eqref{LainG}.
\end{proof}
{\bf Note.\/} Thus the probability of every event expressed through the heights of $k=[n^{\gamma}]$ tallest columns and the lengths of $k$ longest rows is within the distance $O\bigl(n^{2\ga-1/2}\log^3 n\bigr)$ from the probability of the event where the heights and the widths are replaced, respectively, with 
\begin{equation*}
\left\lceil \frac{n^{1/2}}{c}\log\frac{\tfrac{n^{1/2}}{c}}{\sum_{j=1}^i \mathcal E_j}\right\rceil_{1\le j\le k}\text{ and }\quad
\left\lceil \frac{n^{1/2}}{c}\log\frac{\tfrac{n^{1/2}}{c}}{\sum_{j=1}^i \mathcal E'_j}\right\rceil_{1\le i\le k}
\end{equation*}
In \cite{Pittel3} we proved a counterpart of this Theorem for the random $3$-dimensional diagram formed by packing $n$
unit cubes into a corner. For a {\it fixed\/} $k$: jointly, the $k$ largest lengths of vertical stacks of unit cubes, and the $k$ largest
lengths of the horizontal ``beams'' of unit cubes, parallel to the $x$-axis  and, separately, to the $y$-axis, converge in distribution to
a three-group version of the display above, with $n^{1/2}$ becoming $n^{1/3}$, and $c$ becoming $(2\zeta(3))^{1/3}$. Of course,
there is an additional sequence $\{\mathcal E''_j\}$ independent of $\{\mathcal E_j,\,\mathcal E_j'\}_{1\le j\le k}$.

\section{Graphical Partitions} Our task is to bound Wilf's probability $P(n):=\pr(\Lambda\text{ is graphical})$
using Theorem \ref{dTVsmall}. According
to the Nash-Williams conditions \eqref{Nash}, $\Lambda$ is graphical iff
\begin{equation}\label{N-W}
\sum_{j=1}^i\La'_j\ge \sum_{j=1}^i\La_j+i,\quad 1\le i\le D(\La),
\end{equation}
where $D(\La)$ is the size of the Durfee square of $\La$, i.e. the number of rows of the largest square inscribed into the Young diagram $\La$. 

Combining \eqref{Sksmall}, \eqref{S1,S'1} and Theorem \ref{dTVsmall}, we see that, with probability $1-O\bigl(n^{2\ga-1/2}\log^3n\bigr)$,
all $\La_j$, $\La'_j$ are between $a n^{1/2}\log n$ and $b n^{1/2}\log n$, for some
constants $a,b$. In particular, with probability this high,
$\La_k/k,\,\La'_k/k \gg 1$, whence $D(\Lambda)\gg k$,
and
\[
\sum_{j=1}^i\La_j+i=\bigl(1+O(n^{-1/2})\bigr)\sum_{j=1}^i\La_j.
\]
Using this fact and applying Theorem \ref{dTVsmall} yet again, we see that 
\begin{multline}\label{P(n)<expl}
P(n)\le O\bigl(n^{2\ga-1/2}\log^3n\bigr)\\
\qquad +\pr\left(\bigcap_{1\le u\le k}\!\!\left\{\frac{\varLambda'_u}{n^{1/2}\log n}\in [a,b]\right\}\bigcap_{i=1}^k\!\left\{\sum_{j=1}^i\varLambda_j\ge \bigl(1+O(n^{-1/2})\bigr)\!\sum_{j=1}^i\varLambda'_j\right\}\!\right)\!,
\end{multline}
where, as we recall,
\begin{align*}
\varLambda_j=&\left\lceil\frac{n^{1/2}}{c}\log\frac{\tfrac{n^{1/2}}{c}}{S_j}\right\rceil,\quad S_j=\sum_{\ell=1}^j \mathcal E_{\ell},\\
\varLambda'_j=&\,\left\lceil\frac{n^{1/2}}{c}\log\frac{\tfrac{n^{1/2}}{c}}{S'_j}\right\rceil,\quad S'_j=
\sum_{\ell=1}^j\mathcal E'_{\ell},
\end{align*}
with $\mathcal E_{\ell}$, $\mathcal E'_{\ell}$ being independent copies of $\mathcal E$, ($\pr(\mathcal E>x)=e^{=x}$). The
event on the RHS of \eqref{P(n)<expl} looks more complex than the Nash-Williams conditions
\eqref{N-W}, but crucially that event  is expressed in terms of eminently tractable sums of the
i.i.d. random variables.

Now, for 
\[
\frac{\varLambda'_u}{n^{1/2}\log n}\in [a,b], \quad 1\le u\le k,
\]
we have 
\[
\left\{\sum_{j=1}^i\varLambda_j\ge \bigl(1+O(n^{-1/2})\bigr)\!\sum_{j=1}^i\varLambda'_j\right\}
\subseteq\left\{\prod_{j=1}^i\frac{S'_j}{S_j}\ge 1+O\bigl(n^{\ga-1/2}\log n\bigr)\right\},
\]
uniformly for $i\le k$. Thus, for $k=[n^{\ga}]$,
\begin{equation}\label{P(n)<Pk}
\begin{aligned}
P(n)\le&\,O\bigl(n^{2\ga-1/2}\log^3n\bigr)+P_k(n),\\
P_k(n):=&\,\pr\left(\min_{1\le i\le k}\prod_{j = 1}^i\frac{S_{j}^{'}}{S_{j}} \ge \frac{1}{2}\right).
\end{aligned}
\end{equation}
That $\lim_{n\to\infty}P(n)=0$ was already proved in \cite{Pittel2}. We will show that in fact
$P(n)\to 0$ faster than any negative power of $\log n$.
To this end, we need to analyze the likely behavior of the products $\prod_{j = 1}^{i}S'_{j}/S_{j}$.
What follows is the series of estimates that ultimately will allow us to achieve this goal.
\\

{\bf (1)\/} For large $j$ the distributions of $S_j$ and $S_j'$ are concentrated around $j\ex\,[\mathcal E ]=j$. So it is natural to scale both $S_j$ and $S_j'$ by $j$ ant to center the resulting fractions by $1$. So we define $R_j=(S_j-j)/j$ and $R_j'=(S_j'-j)/j$, which are
equidistributed, of course. Using $S_j-j=\sum_{i=1}^j(\mathcal E_i-1)$, we have:  for $z\in (0,1)$,
\[
\ex\bigl[\exp(z(S_j-j))\bigr]=\left(\ex\bigl[\exp(z(\mathcal E-1))\bigr]\right)^j=\left(\frac{e^{-z}}{1-z}\right)^j.
\]
So, given $d>0$, we use the Chernoff-type bound to estimate
\[
\pr(R_j\ge d)=\pr\bigl(S_j-j\ge jd\bigr)
\le\frac{\left(\tfrac{e^{-z}}{1-z}\right)^j}{e^{zjd}}
=\exp\left[j\left(\log\frac{e^{-z}}{1-z}-zd\right)\right].
\]
The last function attains its {\em minimum} at $z=\tfrac{d}{1+d}$, and so
\begin{equation*}
\pr(R_j\ge d)\le \exp\bigl[j(\log(1+d)-d)\bigr]\le \exp\bigl(-jd^2/3\bigr),\quad\forall\, d\le 1/2.
\end{equation*}
Similarily
\begin{equation*}
\pr(R_j\le -d)\le \exp\bigl[j(\log(1-d)+d)\bigr]\le \exp\bigl(-jd^2/2\bigr),\quad\forall\, d\le 1.
\end{equation*}
Therefore
\begin{equation}\label{PmaxRj,jlarge}
\pr(|R_j|\ge d)\le \exp\bigl[j(\log(1+d)-d)\bigr]\le 2\exp\bigl(-jd^2/3\bigr),\quad\forall\, d\le 1/2.
\end{equation}
Needless to say, we have the similar bound for $\pr(|R_j'|\ge d)$. \\

Let us use \eqref{PmaxRj,jlarge} to 
show that with high probability (whp) the $|R_j|$ are all small for $j$ at a sizable distance from $1$. Let $\chi=\chi(k)<k$ be such that  $\chi\to\infty$, $\chi=o(k)$ as $k\to\infty$.
Picking $\a>1/2$ and denoting $y_1=\log(\chi-1)$, $y_2=\log k$, we have
\begin{align*}
&\pr\left(\bigcup_{\chi\le j\le k}\bigl\{|R_j|\ge j^{-1/2}(\log j)^{\a}\bigr\}\right)\le \sum_{j=\chi}^k\pr\bigl(|R_j|\ge j^{-1/2}(\log j)^{\a}\bigr)\\
&\quad \le 2\sum_{j=\chi}^k e^{-(\log j)^{2\a}/3}\le 2\int\limits_{\chi-1}^k\!\!e^{-(\log x)^{2\a}/3}
\,dx
=2\!\int\limits_{y_1}^{y_2}\!\! e^{-y^{2\a}/3+y}\,dy\\
&\qquad\qquad\qquad\quad\text{using concavity of } -y^{2\a}/3+y\\
&\le2 e^{-y_1^{2\a}/3+y_1}\!\!\int\limits_{y_1}^{\infty}e^{(y-y_1)(1-2\a y_1^{2\a-1}/3)}\,dy
\le 2.5e^{-y_1^{2\a}+y_1}\le 3\chi e^{-(\log \chi)^{2\a}/3}.
\end{align*}
Therefore
\begin{equation}\label{expbound1}
\pr\left(\bigcup_{\chi\le j\le k}\bigl\{|R_j|\ge j^{-1/2}(\log j)^{\a}\bigr\}\right)\le
3\chi \exp\bigl(-(\log \chi)^{2\a}/3\bigr).
\end{equation}

{\bf (2)\/} Consider $j<\chi$. Let us prove that whp $|\sum_{j<\chi}(R_j'-R_j)|$ is not much larger than $\sqrt{\chi}$.
The variables $Y_j:=\mathcal E'_j-\mathcal E_j$ are all independent, ($\ex[Y_j]=0,\,\var (Y_j)=2$), and we have
\[
\sum_{j=1}^{\chi-1}(R_j^\prime-R_j)=\sum_{j=1}^{\chi-1}\frac{1}{j}\sum_{t=1}^j Y_t=
\sum_{t=1}^{\chi-1}Y_t\sum_{j=t}^{\chi-1}\frac{1}{j}.
\]
So, picking $\omega=\omega(k)\to\infty$, by Chebyshev's inequality, 
\begin{align}
\pr\left(\Bigl|\sum_{j=1}^{\chi}(R_j^\prime-R_j)\Bigr|>\sqrt{\omega\chi}\right)\le&\, (\omega\chi)^{-1}
\var\left(\sum_{j=1}^{\chi-1}(R_j^\prime-R_j)\right)\notag\\
=&\,2(\omega\chi)^{-1}\sum_{t=1}^{\chi-1}\left(\sum_{j=t}^{\chi-1}\frac{1}{j}\right)^2=O(\omega^{-1}),\label{sum,j<kappa}
\end{align}
since 
\begin{align*}
2\sum_{t=1}^{\chi-1}\left(\sum_{j=t}^{\chi-1}\frac{1}{j}\right)^2\le&\,\sum_{t=1}^{\chi-1}
2\left(\frac{1}{t}+\log\frac{\chi-1}{t}\right)^2
\le\sum_{t=1}^{\chi-1}\left(\frac{1}{t^2}+\log^2\frac{\chi-1}{t}\right)\\
\le&\,\sum_{t=1}^{\infty}\frac{1}{t^2}+(\chi-1)\int_0^1\log^2 x\,dx=O(\chi).
\end{align*}

{\bf (3)\/} Continuing with the case $j<\chi$, we need to obtain a likely {\it lower\/} bound for $\prod_{j<\chi}S_j/S_j'$.
Given $\be >1$, by independence of $S_j^\prime$ and $S_j$ we have: for $0<z <1$,
\begin{align*}
\pr\left(\frac{S_j^\prime}{S_j}\ge\be\right)=&\,\pr\bigl(e^{z(S_j^\prime-\be S_j)}\ge 1\bigr)\\
\le&\,\ex\bigl[e^{z(S_j^\prime-\be S_j)}\bigr]=\left(\frac{1}{1-z}\right)^j\left(\frac{1}{1+z\be}\right)^j.
\end{align*}
The function $(1-z)(1+z\be)$ attains its {\em maximum} at $z=\tfrac{\be-1}{2\be}\in (0,1)$, and
the maximum value is $1+\tfrac{(\be-1)^2}{4\be}$. Therefore
\[
\pr\left(\frac{S_j^\prime}{S_j}\ge\be\right)\le\left(1 +\frac{(\be-1)^2}{4\be}\right)^{-j}.
\]
For $\beta=\beta_j:=1+\sqrt{\tfrac{\omega}{j}}$, ($\omega=\omega(k)$ from the previous step {\bf (2)\/}), the bound is 
\[
1 +\frac{(\be-1)^2}{4\be}=f(\omega/j),\quad f(x):=1+\frac{1}{4}\frac{x}{1+\sqrt{x}}.
\]
\bi
Consequently
\[
\pr\left(\bigcup_{1\le j\le\chi-1}\left\{\frac{S_j^\prime}{S_j}\ge\be_j\right\}\right)\le\sum_{j=1}^{\chi-1}
\pr\left(\frac{S_j^\prime}{S_j}\ge\be_j\right)
\le\,\sum_{j=1}^{\chi-1}f(\omega/j)^{-j}.
\]
Let us bound the sum assuming, as we certainly can, that $\omega=\omega(k)\gg \chi$. In that case, for all $j\le \chi$, we have
$f(\omega/j)^{-j}\le \left(4\sqrt{j/\omega}\right)^{-j}$, and---by considering the ratio $\left(4\sqrt{(j+1)/\omega}\right)^{j+1}/ \left(4\sqrt{j/\omega}\right)^{j}$--- it follows that 
$
\sum_{j=1}^{\chi-1}f(\omega/j)^{-j}\le 5\omega^{-1/2}.
$
So, denoting $E_1:=\bigcup_{1\le j<\chi}\left\{\frac{S_j^\prime}{S_j}\ge 1+\sqrt{\frac{\omega}{j}}\right\}$, we conclude that
\begin{equation}\label{expbound2}
\pr(E_1)\le 5\omega^{-1/2}. 
\end{equation}

{\bf (4)\/} For $P_k(n)$ defined in \eqref{P(n)<Pk} we obviously have
\[
P_k(n)\le P_{k,1}(n):=\pr\left\{\min_{\chi\le \ell\le k}\prod_{j = \chi}^{\ell}\frac{S_{j}^{'}}{S_{j}} \ge \frac{1}{2}\prod_{j = 1}^{\chi -1}\frac{S_{j}}{S_{j}^{'}}\right\}.
\]
To find an explicit bound for $P_{k,1}(n)$, we need to find a larger {\it analyzable\/} event by replacing the LHS and the RHS of the event in question with their {\it sufficiently likely\/} upper bound and lower bound, respectively.  

Starting with the RHS, on the likely event $E_1^c$ 
we have
\begin{align*}
\prod_{j <\chi}\tfrac{S_{j}}{S_{j}^{'}}\ge&\, \prod_{j<\chi}\left(1+\sqrt{\frac{\omega}{j}}\,\right)^{-1}
\ge\,\exp\left(-\sum_{j<\chi}\sqrt{\frac{\omega}{j}}\right)
\ge\exp\bigl(-3\sqrt{\omega \chi}\bigr).
\end{align*}
Turning to the LHS, on the likely event 
\[
E_2:=\Biggl\{\sum_{j=1}^{\chi-1}(R_j-R_j')\le \sqrt{\omega \chi}\Biggr\}\bigcap\bigcup_{\chi\le j\le k}\!\!\bigl\{|R_j|+|R_j^\prime|\le2 j^{-1/2}(\log j)^{\alpha}\bigr\},
\]
 by the inequality $\log(1+x)\le x$, we have:
\begin{align}
\prod_{j=\chi}^{\ell}\frac{S_j^\prime}{S_j}=&\,\prod_{j=\chi}^{\ell}\frac{1+R_j^\prime}
{1+R_j}\le\exp\left(\sum_{j=\chi}^{\ell}\log\frac{1+R_j^\prime}{1+R_j}\right)\le
\exp\left(\sum_{j=\chi}^{\ell}\frac{R_j^\prime-R_j}{1+R_j}\right)\notag\\
&\quad\le\,\exp\left(\sum_{j=\chi}^{\ell}(R_j^\prime-R_j)+4\sum_{j=\chi}^{\ell}j^{-1}(\log j)^{2\a}\right)\notag\\
&\quad\le\,\exp\left(\sum_{j=1}^{\ell}(R_j^\prime-R_j)+\sqrt{\omega \chi}+2(\log k)^{2\a+1}\right).\label{sum,j>kappa}
\end{align}
Combining \eqref{expbound1}, \eqref{sum,j<kappa}, \eqref{expbound2} and \eqref{sum,j>kappa},
we arrive at 
\begin{equation}\label{Pk<Pk2}
\begin{aligned}
&\quad\qquad P_k(n)\le P_{k,1}(n)\le  P_{k,2}(n) +U,\\
&P_{k,2}(n):=\,\pr\left\{\min_{\chi\le\ell\le k}\sum_{j=1}^{\ell}(R_j^\prime-R_j)\ge -V\right\},
\\
&U=U(\chi,\omega):=\,6\chi \exp\bigl(-(\log \chi)^{2\a}/3\bigr)+6\, \omega^{-1/2},\\
&\quad\,\, V=V(\chi,\omega,k):=\,4\sqrt{\omega \chi}+2(\log k)^{2\a+1};
\end{aligned}
\end{equation}
here $\alpha>1/2$ is fixed. We will use \eqref{Pk<Pk2} for
\begin{equation}\label{kappa,omega=}
\chi=\left\lfloor\exp\left((\log k)^{\tfrac{1}{2\alpha}}\right)\right\rfloor,\quad \omega=\exp\left(
\frac{2\log k}{\log\log k}\right),
\end{equation}
(i.e. $\omega\gg k$ as stipulated earlier). For these $\chi,\,\omega$, we have 
\begin{equation}\label{U,V=}
\begin{aligned}
U\le U^*:=&\,7\exp\left(-\frac{\log k}{\log\log k}\right),\\
 V\le V^*:=&\,\exp\left(\frac{2\log k}{\log\log k}\right).
 \end{aligned}
 \end{equation}
That these $\chi$ and $\omega$ work will become clearer later.\\

{\bf (5)\/} Let us use independence of $Y_j=\mathcal E_j^\prime-\mathcal E_j$ to show that the event in \eqref{Pk<Pk2}
is {\it almost\/} (i.e. aside from an event of negligible probability) contained in the intersection of a sequence of independent events such that the product of their
individual probabilities goes to zero at a certain {\em explicit} rate. Introduce the sequence
$\{\ell_r\}$: pick $a>0$ and define
\begin{equation}\label{ellrdef}
\ell_0=\chi,\quad \ell_r=\ell_0\zeta^r,\quad \zeta=\zeta(k)=\lfloor a(\log\log k)\log k\rfloor,
\end{equation}
That is, $\ell_r$ form a geometric progression with denominator $\zeta$. So 
\begin{equation}\label{rho(k)}
|\{r>0: \ell_r\le k\}| \ge \rho=\rho(k):=\left\lfloor\frac{\log\tfrac{k}{\chi}}
{\log\bigl[a(\log\log k)\log k\bigr]}\right\rfloor\sim \frac{\log k}{\log\log k},
\end{equation}
since by \eqref{kappa,omega=} $\log\chi=o(\log k)$. Introducing 
\[
Z_{r}:=\sum_{j=1}^{\ell_r}(R_j^\prime-R_j)=\sum_{j=1}^{\ell_r}\frac{1}{j}\sum_{t=1}^jY_t,\qquad
Y_t=E_t^\prime-E_t,
\]
we have: for $r\in [1,\rho]$,
\begin{align}
Z_r=&\,\sum_{j=1}^{\ell_r}\frac{1}{j}\sum_{t=1}^jY_t=\sum_{t=1}^{\ell_r}Y_t
\sum_{j=t}^{\ell_r}\frac{1}{j}\notag\\
=&\sum_{t=1}^{\ell_{r-1}}Y_t
\sum_{j=t}^{\ell_r}\frac{1}{j}+\sum_{t=\ell_{r-1}+1}^{\ell_r}Y_t
\sum_{j=t}^{\ell_r}\frac{1}{j}=:Z_{r,1}+Z_{r,2}.\label{Zr=}
\end{align}
Here $Z_{r,1}$ is measurable with respect to $\mathcal F_{r-1}$, the $\sigma$-field generated by $Y_1,\dots,Y_{\ell_{r-1}}$, and $Z_{r,2}$ is independent of  $\mathcal F_{r-1}$. Crucially as well,
$Z_{r,2}$ are mutually independent.

To see the intuitive reason behind our choice of $\ell_r$,  observe that, since $Y_t$ are independent and $\ex[Y_t]=0$, $\var(Y_t)=2$, we have $\ex[Z_{r,1}]=0$ and 
\begin{equation}\label{VarZr1<}
\begin{aligned}
&\var(Z_{r,1})=2\sum_{t=1}^{\ell_{r-1}}\left(\sum_{j=t}^{\ell_r}\frac{1}{j}\right)^2
\le\sum_{t=1}^{\ell_{r-1}}\left(\frac{1}{t^2}+\log^2\frac{\ell_r}{t}\right)\\
\le&\, \frac{\pi^2}{6}+\ell_r\!\!\int\limits_0^{\tfrac{\ell_{r-1}}{\ell_r}}\!\!(\log x)^2\,dx
\le (2+\varepsilon_r)\ell_{r-1}\left(\log\frac{\ell_r}{\ell_{r-1}}\right)^2,\quad (\varepsilon_r\to 0),
\end{aligned}
\end{equation}
i.e. $\var(Z_{r,1})=O\bigl(\ell_{r-1}(\log\log k)^2\bigr)$. Thus, {\em typically}, $|Z_{r,1}|$ is roughly of order $O\bigl(\sqrt{\ell_{r-1}}\log\log k\bigr)$.  As for $Z_{r,2}$, its expected value is also zero, and
\begin{equation}\label{VarZr2sim}
\begin{aligned}
\var(Z_{r,2})=&\,2\sum_{t=\ell_{r-1}+1}^{\ell_r}\left(\sum_{j=t}^{\ell_r}\frac{1}{j}\right)^2
\ge 2\sum_{t=\ell_{r-1}+1}^{\ell_r}\log^2\left(\frac{\ell_r}{t}\right)\\
\sim&\,2\ell_r\int_0^1(\log x)^2\,dx=4\ell_r,
\end{aligned}
\end{equation}
meaning, {\em hopefully}, that $|Z_{r,2}|$ assumes values of order $\sqrt{\ell_r}$ with not too small probability. And $[-\sqrt{\ell_r},\sqrt{\ell_r}]\ni 
[-\sqrt{\ell_{r-1}}\log\log k,\sqrt{\ell_{r-1}}\log\log k]$ with room to spare, because 
\[
\frac{\sqrt{\ell_{r-1}}\log\log k}{\sqrt{\ell_r}}=O\left(\sqrt{\frac{\log\log k}{\log k}}\,\right).
\]
Notice also that $\sqrt{\ell_r}\gg V^*$, which is the bound for $V$ defined in \eqref{U,V=},  if 
\begin{equation}\label{r*=}
r\ge r^*:=\frac{3\log k}{(\log\log k)^2}\ll \rho(k)\,\sim\frac{\log k}{\log\log k}.
\end{equation}
So we should expect that for those $r$,  conditioned on $\mathcal F_{r-1}$, the probability that $Z_r>-V^*$ is close to the unconditional $\pr(Z_{r,2}>-V^*)\sim \tfrac{1}{2}$, the latter holding because $Z_{r,2}$ and $-Z_{r,2}$ are 
equidistributed. Once these steps are justified, we will get a bound analogous to, but weaker
than the naive bound
\begin{equation*}
P_{k,2}(n)\le \left(\frac{1}{2}+o(1)\right)^{\rho(k)}\!\!=\exp\bigl(-(\log 2)\rho(k)\bigr),
\end{equation*}
see \eqref{rho(k)} for the definition of $\rho(k)$.\\

To start, since $\ex\bigl[e^{\xi Y}\bigr]=(1-\xi^2)^{-1}$ for $|\xi|<1$, we see that, by \eqref{VarZr1<},
\begin{align*}
&\qquad\qquad\qquad\ex\bigl[\exp(\xi Z_{r,1})\bigr]=\,\prod_{t=1}^{\ell_{r-1}}\left[1-\xi^2\left(\sum_{j=t}^{\ell_r}\frac{1}{j}\right)^2\right]^{-1}\\
&\qquad\qquad\qquad\qquad\text{for }|\xi|\ll \Bigl(\sum_{j=1}^{\ell_r}1/j\Bigr)^{-1}\sim \log^{-1}\ell_r\\
&\le\exp\left[\xi^2(1+o(1))\sum_{t=1}^{\ell_{r-1}}\left(\sum_{j=t}^{\ell_r}\frac{1}{j}\right)^2\right]
= \exp\left[(2+o(1))\ell_{r-1}\left(\log\frac{\ell_r}{\ell_{r-1}}\right)^2\xi^2\right].
\end{align*}
Consequently, for every such $\xi>0$,
\[
\pr\bigl(Z_{r,1}\ge \sqrt{\ell_r}\bigr)\le \exp\left[(2+o(1))\ell_{r-1}\left(\log\frac{\ell_r}{\ell_{r-1}}\right)^2\xi^2-\xi\sqrt{\ell_r}\right].
\]
The exponent attains its absolute minimum at 
\[
\xi=\frac{\sqrt{\ell_r}}{(2+o(1))\ell_{r-1}\left(\log\tfrac{\ell_r}{\ell_{r-1}}\right)^2}\ll \zeta^{-(r-2)/2}\ll
(\log k)^{-1} =O\bigl((\log \ell_r)^{-1}\bigr),
\]
for  $r\ge 4$, as $\ell_r\le k$, meaning that this $\xi$ is (easily) admissible for $r\ge r^*$.  For these $r$ and  $\xi$ we obtain
\begin{align*}
\pr\bigl(Z_{r,1}\ge \sqrt{\ell_r}\bigr)\le&\, \exp\Biggl(-\,\frac{\ell_r}{(4+o(1))\ell_{r-1}\left(\log\tfrac{\ell_r}{\ell_{r-1}}\right)^2}\Biggr)\\
&\,= \exp\left(\,-\frac{a\log k}{(4+o(1))\log\log k}\right),
\end{align*}
if $k$ is large enough; see \eqref{ellrdef} for the ratio $\ell_r/\ell_{r-1}$.  Consequently, with $\rho=\rho(k)$ defined in \eqref{rho(k)},
\begin{equation}\label{P(Zr1small)}
\begin{aligned}
\pr\left(\bigcup_{r=r^*}^{\rho}\left\{Z_{r,1}\ge\,\sqrt{\ell_r}\right\}\right)\le&\, \rho\exp\left(-\frac{a\log k}{(4+o(1))\log\log k}\right)\\
=&\, \exp\left(-\frac{a\log k}{(4+o(1))\log\log k}\right).
\end{aligned}
\end{equation}
Notice at once that the last RHS dwarfs $U^*$, see \eqref{U,V=}, the upper bound
for the term $U$ in the inequality \eqref{Pk<Pk2} for $P_k$.

The rest is short. Since the random variables $Z_{r,1}$ are mutually independent, we use
\eqref{P(Zr1small)} to bound
\begin{align*}
P_{k,2}(n)\le&\,\pr\left(\bigcap_{r=r^*}^{\rho}\{Z_r\ge -V^*\}\right)\\
\le&\,\pr\left(\bigcup_{r=r^*}^{\rho}\left\{Z_{r,1}\ge\,\sqrt{\ell_r}\right\}\right)+
\pr\left(\bigcap_{r=r^*}^{\rho}\{Z_r\ge -V^*,\,Z_{r,1}\le \sqrt{\ell_r}\}\right)\\
\le&\,\exp\left(-\frac{a\log k}{(4+o(1))\log\log k}\right)+\prod_{r=r^*}^{\rho}\pr\bigl(Z_{r,2}\ge -V^*-\sqrt{\ell_r}\bigr).
\end{align*}
Now, $r^*$ was chosen to make $\sqrt{\ell_r}\gg V^*$ (see the line preceding \eqref{r*=}), and (by \eqref{VarZr2sim}) we also know
that $\var(Z_{r,2})\sim 4\ell_r$. Since $Z_{r,2}$ is asymptotically normal with mean $0$ and
variance $4\ell_r$ we see that
\[
\pr\bigl(Z_{r,2}\ge -V^*-\sqrt{\ell_r}\bigr)=\frac{1}{\sqrt{2\pi}}\int_{-1/2}^{\infty}e^{-u^2/2}\,du +o(1).
\]
Since $\rho=\rho(k)\sim\tfrac{\log k}{\log\log k}$, and $r^*=o(\rho)$, we obtain
\begin{multline}\label{Pk2<exp(-alogk)}
P_{k,2}(n)\le \exp\left(-\frac{a\log k}{(4+o(1))\log\log k}\right)\\
+\left(\frac{1}{\sqrt{2\pi}}\int_{-1/2}^{\infty}e^{-u^2/2}\,du +o(1)\right)^{\tfrac{\log k}{\log\log k}}
\le\exp\left(-\frac{b\log k}{\log\log k}\right),
\end{multline}
if 
\[
b<\log \left(\frac{1}{\sqrt{2\pi}}\int_{-1/2}^{\infty}e^{-u^2/2}\,du\right)^{-1}=0.445...,
\]
and $a>4b$. Observe also that
\[
\exp\left(-\frac{b\log k}{\log\log k}\right)\gg U^*:=7\exp\left(-\tfrac{\log k}{\log\log k}\right),
\]
see \eqref{U,V=}. So, by \eqref{Pk<Pk2},
\begin{equation*}\label{thebound}
P_k(n)\le P_{k,2}(n)+U^*\le \exp\left(-\frac{0.445..\log k}{\log\log k}\right).
\end{equation*}
Using this bound and \eqref{P(n)<Pk}, we conclude that, for every $\ga<1/4$,
\[
P(n)\le O\bigl(n^{2\ga-1/2}\log^3 n\bigr)+\left.\exp\left(-\frac{0.445..\log k}{\log\log k}\right)\right|_{k=[n^{\ga}]}.
\]
\begin{theorem}\label{P(n)bound} For $n$ large enough,
\[
P(n)\le \exp\left(-\frac{0.11\log n}{\log\log n}\right).
\]
\end{theorem}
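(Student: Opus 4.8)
The plan is to assemble the estimates already in hand. By \eqref{P(n)<Pk}, for every fixed $\ga<1/4$ and $k=[n^{\ga}]$ one has $P(n)\le O\bigl(n^{2\ga-1/2}\log^3 n\bigr)+P_k(n)$, while \eqref{Pk<Pk2} together with \eqref{Pk2<exp(-alogk)} (and the choice of $\kappa,\omega$ in \eqref{kappa,omega=}, which makes $U^*$ negligible against the second term of \eqref{Pk2<exp(-alogk)}) yields
\[
P_k(n)\le \exp\left(-\frac{0.445\ldots\,\log k}{\log\log k}\right).
\]
Hence the whole matter reduces to re-expressing the right-hand side in terms of $n$ and checking that the polynomial remainder $O\bigl(n^{2\ga-1/2}\log^3 n\bigr)$ is negligible against it.

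First I would fix $\ga$ close enough to $1/4$ that $0.445\ldots\,\ga>0.111$; such $\ga<1/4$ exists because $0.445\ldots/4>0.111$. For $k=[n^{\ga}]$ we have $\log k=\ga\log n+O(1)$ and $\log\log k=\log\log n+O\bigl((\log n)^{-1}\bigr)$, so that
\[
\frac{\log k}{\log\log k}=\bigl(1+o(1)\bigr)\,\frac{\ga\log n}{\log\log n}.
\]
Plugging this into the displayed bound for $P_k(n)$ and invoking $0.445\ldots\,\ga>0.111$, we obtain, for all $n$ large enough,
\[
P_k(n)\Big|_{k=[n^{\ga}]}\le \exp\left(-\frac{0.111\,\log n}{\log\log n}\right).
\]

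It remains to absorb the error term. Since $2\ga-1/2<0$ is a fixed negative constant, $n^{2\ga-1/2}\log^3 n$ is dominated by a fixed negative power of $n$ for $n$ large, whereas $\exp(-0.11\log n/\log\log n)=n^{-0.11/\log\log n}$ decays more slowly than any fixed negative power of $n$; consequently $O\bigl(n^{2\ga-1/2}\log^3 n\bigr)=o\bigl(\exp(-0.11\log n/\log\log n)\bigr)$. Adding this to the previous display and using \eqref{P(n)<Pk}, the two contributions to $P(n)$ together are at most $\exp(-0.11\log n/\log\log n)$ once $n$ is sufficiently large, which is the assertion.

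There is no real obstacle here; the step is pure bookkeeping. The only thing to watch is that the constant $0.445\ldots$ enters \eqref{Pk2<exp(-alogk)} entangled with a factor $(4+o(1))$ and an additive $o(1)$ inside the exponent, and that after multiplying by $\ga<1/4$ the slack above $0.11$ is tiny. Choosing $\ga$ with a safety buffer and carrying the intermediate estimate at the marginally larger constant $0.111$ — so that the polynomial remainder can be absorbed without crossing the threshold $0.11$ — resolves this.
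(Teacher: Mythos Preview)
Your proposal is correct and follows exactly the paper's approach: the paper derives $P(n)\le O\bigl(n^{2\ga-1/2}\log^3 n\bigr)+\exp\bigl(-0.445\ldots\log k/\log\log k\bigr)\big|_{k=[n^{\ga}]}$ immediately before stating the theorem, and the passage from there to the stated bound is precisely the bookkeeping you carry out (choosing $\ga$ close to $1/4$, converting $\log k/\log\log k$ to $\ga\log n/\log\log n$, and noting that the polynomial remainder is negligible). If anything, you make the last comparison more explicit than the paper does.
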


\section{Pairs of comparable partitions} Almost as an afterthought, let $\Lambda$, $\Delta$ be two uniformly random,
mutually independent, partitions of $n$. Our task is to bound Macdonald's probability, i.e.
\[
Q(n):=\pr(\Lambda\succeq\Delta)=\pr\left(\bigcap_i\left\{\sum_{j=1}^i\Delta_j\le 
\sum_{j=1}^i\Lambda_j\right\}\right).
\]
(Notice that the Durfee squares do not enter the comparability conditions at all.)
This time the tuples $(\Delta_j)_{j\ge 1}$ and $(\Lambda_j)_{j\ge 1}$ are mutually independent.
 Furthermore, the distributions of the sub-tuples,
$(\Delta_j)_{1\le i\le [n^{\ga}]}$ and $(\Lambda_j)_{1\le i\le [n^{\ga}]}$, are each within the total variation distance of order $O\bigl(n^{-1/2+2\ga}$ $\log^3n\bigr)$ from 
the distribution of $(\varLambda_j)_{1\le i\le [n^{\ga}]}$; see Theorem \ref{dTVsmall}. Thus
a simplified version of the proof of Theorem \ref{P(n)bound} establishes
\begin{theorem}\label{Q(n)bound} For $n$ large enough,
\[
Q(n)\le \exp\left(-\frac{0.11\log n}{\log\log n}\right).
\]
\end{theorem}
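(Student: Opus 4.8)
The plan is to mirror the proof of Theorem \ref{P(n)bound}, exploiting the fact that the comparability conditions for $\Lambda\succeq\Delta$ are formally simpler than the Nash-Williams conditions because (i) no Durfee-square cutoff $D(\La)$ appears, and (ii) $\Lambda$ and $\Delta$ are \emph{exactly} independent rather than only asymptotically so. First I would invoke Theorem \ref{dTVsmall} twice, once for $(\Lambda_j)_{1\le j\le k}$ and once for $(\Delta_j)_{1\le j\le k}$, with $k=[n^\ga]$ and $\ga<1/4$ fixed, to replace the true marginals by independent copies of the ``slanted'' tuple $(\varLambda_j)_{1\le j\le k}$ defined in \eqref{slantLaj,La'j=}--\eqref{Hj,Wj=}, at a total cost $O(n^{2\ga-1/2}\log^3 n)$. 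Concretely, let $\varLambda_j$ be built from partial sums $S_j=\sum_{\ell\le j}E_\ell$ and $\varLambda'_j$ from an independent family $S'_j=\sum_{\ell\le j}E'_\ell$; then
\[
Q(n)\le O\bigl(n^{2\ga-1/2}\log^3 n\bigr)+\pr\left(\bigcap_{1\le i\le k}\left\{\sum_{j=1}^i\varLambda_j\ge\sum_{j=1}^i\varLambda'_j\right\}\right),
\]
using as before that on the high-probability event where all $\varLambda_j,\varLambda'_j$ lie in $[a n^{1/2}\log n,\,b n^{1/2}\log n]$ the relevant index range $i\le k$ already captures the comparability constraint (any index beyond the lengths contributes a trivially satisfied inequality, and we only need to control the first $k$).

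The next step is the translation into products of the i.i.d. sums exactly as in \eqref{P(n)<Pk}: for $\varLambda'_j/(n^{1/2}\log n)\in[a,b]$ one has
\[
\left\{\sum_{j=1}^i\varLambda_j\ge\sum_{j=1}^i\varLambda'_j\right\}\subseteq\left\{\prod_{j=1}^i\frac{S'_j}{S_j}\ge 1+O\bigl(n^{\ga-1/2}\log n\bigr)\right\},
\]
so that
\[
Q(n)\le O\bigl(n^{2\ga-1/2}\log^3 n\bigr)+\pr\left(\min_{1\le i\le k}\prod_{j=1}^i\frac{S'_j}{S_j}\ge\tfrac12\right)=O\bigl(n^{2\ga-1/2}\log^3 n\bigr)+P_k(n),
\]
where $P_k(n)$ is literally the quantity already bounded in \eqref{P(n)<Pk}. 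Hence no new work is needed downstream: the entire Chernoff/truncation machinery of Section 3---the split at $\kappa$ via \eqref{expbound1} and \eqref{expbound2}, the reduction to $P_{k,2}(n)$ in \eqref{Pk<Pk2}, the choice of $\kappa,\omega$ in \eqref{kappa,omega=}, the geometric scales $\ell_r$ in \eqref{ellrdef}, the independence of the increments $Z_{r,2}$, and the CLT-based estimate $\pr(Z_{r,2}\ge -V^*-\sqrt{\ell_r})\to\tfrac1{\sqrt{2\pi}}\int_{-1/2}^\infty e^{-u^2/2}\,du$ in \eqref{Pk2<exp(-alogk)}---applies verbatim and yields $P_k(n)\le\exp\bigl(-0.445\cdots\log k/\log\log k\bigr)$. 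Substituting $k=[n^\ga]$, letting $\ga\uparrow 1/4$, and absorbing the polynomially small error term, one gets $Q(n)\le\exp\bigl(-0.11\log n/\log\log n\bigr)$ for $n$ large, which is the assertion.

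The only genuinely new point to check is the reduction step, i.e. that the comparability event for the random partitions $\Lambda,\Delta$ really is controlled by the first $k=[n^\ga]$ inequalities together with the $[a,b]$-confinement of the parts; this is exactly where the absence of the Durfee-square truncation \emph{helps} rather than hurts, since we do not need to argue $D(\Lambda)\gg k$ as in \eqref{N-W}, only that inequalities with index exceeding the (common order $n^{1/2}\log n$) number of columns are automatically satisfied once the confinement holds. The main ``obstacle'' is therefore bookkeeping rather than substance: one must verify that replacing \emph{two} marginals by slanted copies costs only $O(n^{2\ga-1/2}\log^3 n)$ (two applications of Theorem \ref{dTVsmall}, with the product measure factoring cleanly because $\Lambda\perp\Delta$), and that the $O(n^{\ga-1/2}\log n)$ slack in the ratio inequality is harmlessly absorbed into the factor-of-$2$ margin defining $P_k(n)$. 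All of this is strictly easier than in the proof of Theorem \ref{P(n)bound}, which is why the result can indeed be obtained ``almost as an afterthought.''
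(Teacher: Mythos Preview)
Your proposal is correct and follows exactly the route the paper sketches: apply (the marginal consequence of) Theorem \ref{dTVsmall} separately to $(\Lambda_j)_{j\le k}$ and $(\Delta_j)_{j\le k}$, use independence of $\Lambda$ and $\Delta$ to combine the two approximations, reduce to the same $P_k(n)$ as in \eqref{P(n)<Pk}, and then quote the estimates of Section 3 verbatim. The only remark is that your final paragraph slightly overcomplicates matters: since $\{\Lambda\succeq\Delta\}$ is an intersection over \emph{all} $i$, restricting to $i\le k$ is automatically an upper bound and requires no confinement argument at all---this is precisely why the paper calls it a ``simplified version'' of the Wilf proof.
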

\bi

\noindent {\bf Acknowledgment.\/} I learned about
Herb Wilf's conjecture from a talk  on an efficient algorithm for
counting graphical partitions given by Carla Savage during the 1996 Conference honoring Herb. Later
Bruce Richmond encouraged me to see whether his work with Paul Erd\"os and the follow-up study by Cecil Rousseau
and Firasath Ali could be extended to yield a resolution of the conjecture.  Indeed, beneath the surface
of asymptotics there was an infinite sequence of independent random variables allowing to prove that the fraction of graphical partition
goes to zero, but leaving the rate question open.
Since then I intermittently contemplated using the full strength of that approach
in order to bound the convergence rate.
A thoughtful letter from Bruce pushed me to get it done finally. I owe a debt of gratitude to him for the inspiring feedback.
I thank the referee for expert reading of the manuscript and helpful suggestions.

\end{document}